\newtheorem{theorem}{Theorem}
\newtheorem{proposition}[theorem]{Proposition}
\newtheorem{corollary}[theorem]{Corollary}
\theoremstyle{definition}
\newtheorem{definition}[theorem]{Definition}
\newtheorem{notation}[theorem]{Notation}
\newtheorem{example}[theorem]{Example}
\newtheorem{assumption}[theorem]{Assumption}
\theoremstyle{remark}
\newtheorem{remark}[theorem]{Remark}
\newcommand{\NN}{ {\mathbb N} }
\newcommand{\RR}{ {\mathbb R} }
\newcommand{\CC}{{\mathbb C}}
\newcommand{\cP}{ {\mathcal P} }
\newcommand{\cPP}{{\mathcal{PP}}}
\newcommand{\GG}{{\mathcal G}}
\newcommand{\cV}{{\mathcal V}}
\newcommand{\KK}{{\kappa}}
\newcommand{\UU}{\mathcal{O}}
\newcommand{\ff}{\varphi}
\newcommand{\tr}{\mathrm{tr}}
\newcommand{\Tr}{\mathrm{Tr}}
\newcommand{\EE}{\mathbb{E}}
\newcommand{\cA}{\mathcal{A}}
\newcommand{\Wg}{\mathrm{Wg}}
\newcommand{\cc}{c}
\newcommand{\kk}{\kappa}
\newcommand{\moeb}{\mathrm{\text{M\"ob}}}
\newcommand{\ab}{\allowbreak}
\begin{document}

\title[Functions on rotationally-invariant matrices]{Entrywise application of non-linear functions on orthogonally invariant matrices}
\author{Roland Speicher}
\author{Alexander Wendel}
\address{Saarland University,
Saarbr\"ucken, Germany}
\thanks{We thank Alberto Dayan for discussions which inspired us to look also at the multivariate case.\\
The second author has been supported by the Deutsche Forschungsgemeinschaft DFG, Project SP419/11-1.}

\begin{abstract}
    In this article, we investigate how the entrywise application of a non-linear function to symmetric orthogonally invariant random matrix ensembles alters the spectral distribution. 
    We treat also the multivariate case where we apply multivariate functions to entries of several orthogonally invariant matrices; where even correlations between the matrices are allowed. We find that in all those cases a Gaussian equivalence principle holds, that is, the asymptotic effect of the non-linear function is the same as taking a linear combination of the involved matrices and an additional independent GOE. The ReLU-function in the case of one matrix and the max-function in the case of two matrices provide illustrative examples.
\end{abstract}
\maketitle

\section{Introduction}

We consider random matrix ensembles $X_N=(x_{ij})_{i,j=1}^N$ which have a limiting eigenvalue distribution for $N\to\infty$. In the machine learning context applying non-linear functions $f$ to the entries of such matrices plays a prominent role and the question arises: what is the asymptotic effect of this operation. There are a couple of results in this direction; see below for some references. However, it seems that they deal with quite special choices for $X_N$, like a product of two independent Wishart matrices or similar constructions. In those cases the asymptotic effect of applying the non-linearity is the same as taking a linear combination of the $X_N$ with an independent Gaussian matrix. The coefficients in this linear combination depend only on a few quantities calculated from the non-linear function. Such statements are often known as Gaussian equivalence principle.

It seems to us that this kind of result is also true much more generally, namely for rotationally invariant matrices; and that the rotational invariance is kind of the underlying reason for the specific form of the result. Roughly, the effect of the non-linearity is only a small deformation of the invariance property. The resulting random matrix ensemble exhibits still an asymptotic form of rotational invariance, which gives precise enough information to control the limiting eigenvalue distribution. We want to work out these ideas in the following. Our approach was motivated by the work of Piccolo and Schr\"oder \cite{piccolo2021analysis} on the above mentioned results for special situations.

There are two situations which are usually considered in this context: namely 
\begin{enumerate}
\item
either $X_N$ is a matrix without any symmetry condition (and then actually it is also allowed to be rectangular), in this case one looks on the eigenvalue distribution of $X_NX_N^*$; this setting was considered, for example, by Louart, Liao, and Couillet \cite{louart2018random} and by Pennington and Worah \cite{pennington2017nonlinear}, see also \cite{benigni2021eigenvalue,fan2020spectra};
\item
or $X_N$ is a symmetric matrix and we look directly on its eigenvalues; this case was considered by El Karoui \cite{el2010spectrum} and by Cheng and Singer \cite{cheng2013spectrum}, see also \cite{fan2019spectral,goldt2022gaussian} . 
\end{enumerate}

We will consider here the symmetric case. Then it also seems to make a difference whether the matrices are real or complex (and thus symmetric or selfadjoint); the real case seems actually to be the more interesting one and as it is also more appropriate in statistical or machine learning settings we will only consider this case, that is, symmetric orthogonally invariant random matrices.

\section{Classical and free cumulants for orthogonally invariant random matrices}

We will look on random matrices $X_N=(x_{ij})_{i,j=1}^N$ where the entries are real random variables, satisfying the constraints $x_{ij}=x_{ji}$ for all $i,j=1,\dots,N$. (The entries $x_{ij}$ of $X_N$ depend of course also on $N$, but we will suppress this in the notation.) Orthogonal invariance then means that the joint distribution of the entries of $X_N$ is the same as of the conjugated matrix $O_N X_N O_N^T$, for any orthogonal $N\times N$-matrix $O_N$. One considers random matrix ensembles where the probability distribution on the entries is given in a uniform way for all sizes $N$, such that one has a limit of the relevant quantities. 


Typically, our orthogonally invariant random matrix ensembles are given either by specifying an invariant density of the form
$$\exp\left(-N\Tr(q(X_N))\right)dX_N$$ (for some polynomial, or more general function, $q$) or by looking on non-commutative polynomials in independent GOE matrices. In both those cases one knows that $c_n(\Tr(X_N^{m_1}),\dots,\Tr(X_N^{m_n}))$ is asymptotically of order $N^{2-n}$ and that the limits
\begin{equation}\label{eq:limitsofcorrelations}
\lim_{n\to\infty} N^{n-2} \cdot c_n(\Tr(X_N^{m_1}),\dots,\Tr(X_N^{m_n}))
\end{equation}
exist for all $n$ and all choices of $m_1,\dots,m_n$. The quantities $c_n$ are here the classical cumulants, considered as $n$-linear functionals; i.e., $c_1$ is the expectation $\EE$, $c_2$ is the covariance, etc. 
Given the existence of all limits in \eqref{eq:limitsofcorrelations}, we say that the random matrix ensemble has a \emph{limit distribution of all orders}. In the following, we will only consider orthogonally invariant random matrices with this property.

We will try to understand the effect of the entrywise application of a, typically non-linear, function $f$ on such matrices by investigating its effect on the classical cumulants of the entries of the matrix. Our main interest is of course not the ``microscopic'' information about the entries, but the ``macroscopic'' information given by the quantities in \eqref{eq:limitsofcorrelations}; in particular, we need to calculate the averaged eigenvalue distribution of the matrix, corresponding to the case $n=1$. However, in the case of our orthogonally invariant matrix ensemble with a limit distribution of all orders there is, in the asymptotic regime $N\to\infty$, a very clear relation between the distribution of the matrix and the distribution of its entries, which takes a particular nice form in terms of cumulants. Let us denote as above by $c_n$ the classical cumulants, considered as multi-linear forms. Then we have on the microscopic level the collection of all the cumulants of the entries
$c_n(x_{i_1j_1},x_{i_2j_2},\dots,x_{i_nj_n})$
for all $n$ and all possible choices of the $x_{ij}$. 
Of this huge collection most quantities have asymptotically quite small order. Namely the orthogonal invariance and the asymptotics of the macroscopic moments as in \eqref{eq:limitsofcorrelations} has the effect that the order of an $n$-th order cumulant can be at most $N^{-(n-1)}$ and this maximal order is only achieved for
cumulants with a cyclic index structure, i.e., those which can, by permuting the arguments and possibly also exchanging the two indices at $x_{ij}=x_{ji}$, be brought into the form
$c_n(x_{i_1i_2},x_{i_2i_3},\dots,x_{i_ni_1})$.
Furthermore, the value of such a cyclic cumulant does in leading order not depend on the choice of the indices. Hence the main microscopic information coming from the entries of our matrix ensemble is given by
\begin{equation}\label{eq:def-kappa}
\kappa_n:=\lim_{N\to\infty} N^{n-1}c_n(x_{i_1i_2},x_{i_2i_3},\dots,x_{i_ni_1}).
\end{equation}
The eigenvalue distribution of our matrix is given in terms of the moments
$$\EE[\tr(X_N^n)]=\frac 1N\sum_{i_1,\dots,i_n=1}^N \EE[x_{i_1i_2}x_{i_2i_3}\cdots x_{i_ni_1}].$$
It now turns out that those macroscopic moments are built out of the microscopic $\kappa_n$ via the free moment-cumulant formula; or to put it another way, the $\kappa_n$ (which are defined in \eqref{eq:def-kappa} as the scaled limits of the classical cyclic cumulants of the entries) are nothing but the free cumulants of the limiting eigenvalue distribution.
(For more information and basic properties of classical and of free cumulants, as well as the free moment-cumulant formula, see\cite{nica2006lectures,mingo2017free}.) The relation between the classical cumulants of the entries and the free cumulants of the matrix for unitarily invariant ensembles is addressed, e.g., in \cite{collins2007second} and for orthogonally invariant ensembles the corresponding formula for the free cumulants was given in \cite[Proposition~6.1]{capitaine2007convolutionsII}.)

It is also interesting to note that such random matrices and their relation to free cumulants have also gained some recent interest in the context of basic physical models and questions, in particular in relation to the quantum symmetric simple exclusion process or the Eigenstate Thermalization Hypothesis; for more information on those, see in particular \cite{pappalardi2022eigenstate, bernard2024structured,fava2023designs, cite-key}.

In the following proposition we collect the relevant information about how the cumulants of entries of orthogonally invariant ensembles scale with $N$. This is very similiar to the complex case as in \cite{collins2007second}. Especially the relation between the free cumulants and cumulants of cycles in the orthogonal case has first been proven (maybe somewhat implicit) in \cite{capitaine2007convolutionsII} and has been rediscovered in the theoretical physics and machine learning community in \cite{maillard2019high} and in the complex case in \cite{bernard2024structured}. Since it is hard to localize the general statement for the orthogonal case in precise form in the literature (in particular with respect to the order of cumulants with several disjoint cycles), we will provide in the appendix a proof by adapting the arguments in \cite{collins2007second} from the unitary to the orthogonal case.

Note that we can do some operations on the entries $x_{ij}$ of the classical cumulants $c_n$ without changing the value of the cumulant: namely, the entries can be permuted and, by the symmetry condition of $X_N$, we can also replace an entry $x_{ij}$ by its symmetric version $x_{ji}$. In the following we will freely apply such operations and address this as arranging the indices.

The main ingredient for the order of the cumulants is in the cycle structure of their indices. 
We say that the indices of $(x_{i_1j_1}, \dots, x_{i_nj_n})$ form $r$ cycles (of lengths $l_1,l_2,\dots,l_r$ with $l_1+\cdots+l_r=n$) if they can be arranged such that we have 
\begin{multline*}
j_1 = i_2, \ j_2  = i_3, \  \dots,\  j_{l_1} = i_1,\qquad 
j_{l_1+1}=i_{l_1 +2},\ \dots,\ j_{l_1+l_2}=i_{l_1+1},\quad \dots\\
\\ \dots\quad 
j_{n-l_r+1}=i_{n-l_r+2},\ \dots,\ j_n=i_{n-l_r+1}
\end{multline*}
If we insist that the indices in different cycles have to be different then we say that the cycles are disjoint or non-connected. If some of the indices in two cycles are the same then we say that those two cycles are connected. If one cycle can be broken into smaller cycles (because some of its indices repeat) we say that the cycle has subcycles.

Note that actually the orthogonal invariance implies that conjugation with $F_j = \sum_{i=1, i \neq j}^N E_{ii} - E_{jj} $ leaves the joint distribution invariant. This has the consequence that, in the language of \cite{maillard2019high}, the contributions of Non-Eulerian graphs are exactly zero, not only in the limit $ N \rightarrow \infty$ as \cite{maillard2019high} states. Together with the results from \Cref{section3} we can thus state the following.

\begin{proposition}\label{lemma:orthcumulants}
Let $X_N=(x_{ij})_{i,j=1}^N$ be a symmetric orthogonally invariant random matrix ensemble which has a limit distribution of all orders (that is, all limits as in \Cref{eq:limitsofcorrelations} exist). Let $i_1, \dots i_n, j_1, \dots, j_n \in \left[ N \right]$. Then the following holds for the joint cumulants $c_n(x_{i_1j_1}, \dots, x_{i_nj_n})$.
\begin{enumerate}
        \item Odd cumulants of single off-diagonal entries vanish: $c_{2n+1}(x_{ij}, \dots, x_{ij})=0$ for $i\not=j$. In particular, the off-diagonal entries are centered.
        \item Joint cumulants $c_n(x_{i_1j_1}, \dots, x_{i_nj_n})$ are non-vanishing only if the indices form cycles. The order of such a cumulant is $\mathcal{O}(N^{2-r-n})$ if the indices form $r$ disjoint cycles.
        \item
        In the case of a single cycle (where subcycles are allowed, that is, some or all of the indices can be the same) we have thus:
	\begin{align*}
		c_n(x_{i_1i_2},x_{i_2i_3}, \dots, x_{i_ni_1}) =  \mathcal{O}(N^{1-n}).
    \end{align*}
    In this case, we actually have that
    \begin{align*}
        \lim_{N \rightarrow \infty} N^{n-1} c_n(x_{i_1i_2},x_{i_2i_3}, \dots, x_{i_ni_1}) = \kk_n,
    \end{align*}
    where $\kk_n$ is the $n$-th free cumulant of the limiting spectral distribution of $X_N$.
\end{enumerate}

\end{proposition}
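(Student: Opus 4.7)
The plan is to leverage orthogonal invariance through three classes of orthogonal matrices of increasing complexity: sign-flip diagonal matrices, permutation matrices, and Haar-random orthogonals combined with Weingarten calculus. The sign-flip and permutation invariances will already give part (1) and the cycle-structure portion of part (2); the order bounds and the identification with free cumulants will then require the Haar average. Concretely, for part (1) I would conjugate by $F_i$ (which flips the sign of row/column $i$): since $(F_iX_NF_i^T)_{ij}=-x_{ij}$ for $j\neq i$, distributional invariance and multilinearity of $c_{2n+1}$ yield $c_{2n+1}(x_{ij},\ldots,x_{ij})=-c_{2n+1}(x_{ij},\ldots,x_{ij})$, which forces it to vanish; the case $n=0$ then gives centering. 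For the cycle condition in part (2), conjugation by $F_k$ sends $x_{ab}\mapsto\epsilon_a(k)\epsilon_b(k)\,x_{ab}$ with $\epsilon_a(k)=-1$ iff $a=k$, so distributional invariance gives
\[
c_n(x_{i_1j_1},\ldots,x_{i_nj_n}) = \left(\prod_{\ell=1}^n \epsilon_{i_\ell}(k)\epsilon_{j_\ell}(k)\right)c_n(x_{i_1j_1},\ldots,x_{i_nj_n})
\]
for every $k\in[N]$. Non-vanishing forces each index to appear an even number of times in the list, i.e., the multigraph $G$ on $[N]$ with edges $(\{i_\ell,j_\ell\})_{\ell=1}^n$ is Eulerian. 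Its Eulerian decomposition into edge-disjoint closed walks, combined with permutation invariance of $c_n$ in its arguments and the symmetry $x_{ij}=x_{ji}$, exhibits the indices as $r$ disjoint cycles of the form described in the statement.

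For the order bound and the leading term, I would average over $O\sim\mathrm{Haar}(\mathcal{O}(N))$ the identity $c_n(x_{i_1j_1},\ldots) = c_n((OX_NO^T)_{i_1j_1},\ldots)$ and expand $(OX_NO^T)_{ij}=\sum_{a,b}O_{ia}O_{jb}x_{ab}$, obtaining by multilinearity
\[
c_n(x_{i_1j_1},\ldots,x_{i_nj_n}) = \!\!\sum_{a_1,b_1,\ldots,a_n,b_n}\!\!\EE_O\!\left[\prod_{\ell=1}^n O_{i_\ell a_\ell}O_{j_\ell b_\ell}\right]c_n(x_{a_1b_1},\ldots,x_{a_nb_n}) .
\]
Orthogonal Weingarten calculus expands the Haar integral as a sum over pair partitions $\sigma,\tau\in P_2(2n)$ with weights $\Wg^{\mathcal{O}}(\sigma,\tau)$ of controlled $N$-asymptotics. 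Collecting the index sums by pair-partition type converts them into classical cumulants of traces $c_{n'}(\Tr(X_N^{m_1}),\ldots,\Tr(X_N^{m_{n'}}))$, which are $\mathcal{O}(N^{2-n'})$ by the limit-distribution-of-all-orders hypothesis. Combining the Weingarten scaling with the Eulerian restriction from the sign-flip step (which eliminates the non-cyclic configurations outright) yields the claimed $\mathcal{O}(N^{2-n-r})$ bound for $r$ disjoint cycles. For a single cycle with distinct indices only the pair partition matching the cycle contributes at leading order; the sum then reduces to the leading asymptotics of $\EE[\tr(X_N^n)]$, and the free moment-cumulant formula identifies $\kk_n$ as the $n$-th free cumulant of the limit distribution.

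The main obstacle is the combinatorial bookkeeping in the orthogonal Weingarten step. Unlike the unitary setting of \cite{collins2007second}, where the integrals involve matched pairs $O\bar O$ and reduce to sums over permutations of $n$ elements, the orthogonal case requires pairing all $2n$ factors of $O$, yielding sums over $P_2(2n)$ and a more intricate coset-type expansion of $\Wg^{\mathcal{O}}$. The Eulerian restriction from the sign-flip step eliminates most partitions before one ever needs sharp Weingarten asymptotics, but verifying that the surviving cyclic contributions combine to exactly $\mathcal{O}(N^{2-n-r})$, uniformly in the index data, with leading coefficient matching the free moment-cumulant formula, is the substance of the argument and will be obtained by adapting \cite{collins2007second} to the orthogonal case along the lines of \cite[Proposition~6.1]{capitaine2007convolutionsII}.
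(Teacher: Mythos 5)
Your strategy matches the paper's (sign-flip conjugation for parts (1) and the Eulerian/cycle restriction, Haar orthogonal averaging plus Weingarten calculus for the order bounds, and the $\mathcal{O}(N^{2-n'})$ hypothesis on trace cumulants as the macroscopic input), but there is a genuine gap in the central Weingarten step. After Haar averaging and expanding by multilinearity you obtain, as you write, a relation of the form
\[
c_n(x_{i_1j_1},\dots,x_{i_nj_n}) \;=\; \sum_{\sigma,\tau\in\cP_2(2n)}\delta_\sigma(i,j)\,\Wg(\sigma,\tau)\,\sum_{\substack{a,b\\ \ker(a,b)\ge \tau}} c_n(x_{a_1b_1},\dots,x_{a_nb_n}).
\]
You then assert that "collecting the index sums by pair-partition type converts them into classical cumulants of traces." This is false as stated: the inner sum $\sum_{\ker\geq\tau} c_n(x_{a_1b_1},\dots,x_{a_nb_n})$ is \emph{not} a cumulant of traces. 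For example, if $\hat\tau$ consists of two $2$-cycles, $c_2(\Tr(X^2),\Tr(X^2))$ expands by the Leonov--Shiryaev formula into $\sum_\pi c_\pi$ over all $\pi$ with $\pi\vee\{(1,2),(3,4)\}=1_4$, which includes products of second cumulants in addition to the $c_4$ term your sum collects; so the two quantities differ. What the Weingarten expansion of the cumulant actually produces is a self-consistency relation with constrained sums of entry cumulants on \emph{both} sides, and these sums are only one piece of the corresponding $\ff(\beta,\sigma)$. The paper circumvents this by applying the Weingarten expansion at the level of \emph{moments} (where the constrained index sum $\sum_{\ker\geq\sigma}\EE[a^{(1)}_{i_1j_1}\cdots a^{(n)}_{i_nj_n}]=\EE[\Tr_\sigma]$ really is a trace moment), and only afterwards performing Möbius inversion on $\cP(n)$ to pass from $\EE_\beta$ to $k_\alpha$ and thereby from trace correlation functions $\ff(\alpha,\sigma)$ to the entry-cumulant quantities $\KK(\alpha,\pi)$; the order bounds and the identification of $\kappa_n$ as a free cumulant are then read off the resulting linear relation \Cref{eq:ff-kk}, not directly from the $n$-th trace moment as your last paragraph suggests. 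A further small imprecision: the Eulerian decomposition you invoke yields edge-disjoint closed walks, whereas the parameter $r$ in the $\mathcal{O}(N^{2-r-n})$ bound counts vertex-disjoint (non-connected) cycles, i.e.\ connected components of the multigraph; you should first pass to connected components and only then organize each into a cycle with subcycles.
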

An example of one cycle is
$$c_4(x_{12},x_{23},x_{34},x_{41})\sim N^{-3}$$
whereas
$$
c_4(x_{12},x_{21},x_{34},x_{43})\sim N^{-4}$$
has two cycles.

Note that usually there are also subleading orders, coming from subcycles; those are harder to control, but luckily they are also not surviving in the calculation of the macroscopic moments. For example, $c_4(x_{12},x_{21},x_{14},x_{41})$ behaves asymptotically in leading order like $c_4(x_{12},x_{23},x_{34},x_{41})$, but has additional subleading orders.

\section{Application of non-linear functions}

Now let $f:\RR\to\RR$ be a, in general non-linear, function and let us apply this entrywise and in an appropriate scaling to our matrix ensemble. Thus we define $Y_N=(y_{ij})$ by
\begin{equation}\label{eq:def-yij}
y_{ij}=\begin{cases}
\frac 1{\sqrt{N}} f(\sqrt{N} x_{ij}), & i\not=j\\
0,& i=j.
\end{cases}
\end{equation}
It will be crucial to get rid of the diagonal terms, because otherwise they would dominate the leading order and we would have to scale in a different way.

For convenience, we will also assume the following property of our function $f$.
\begin{assumption}[Centeredness]
    We assume that our function is such that $f$ is centered w.r.t. the distribution of the off-diagonal entries $x_{ij}$ for any $i,j \in [N]$ with $i \neq j$. 
\end{assumption}
    This restriction can be overcome by adding to $Y_N$ a deterministic matrix which may possess a large outlier eigenvalue and we defer the investigation of additive perturbations of our non-linear ensemble $Y_N$ to future research.

Since we can write a polynomial as a linear combination of monomials,  
$$f(x) = \sum_{m=1}^d a_m x^m,$$ 
it suffices, by multilinearity of the cumulants, to consider the effect of applying, potentially different, powers $f_i(x)=x^{m_i}$ to the arguments, that is:
\begin{align*} \label{cum:multilin}
c_n\left[ f(x_{i_1j_1}), f(x_{i_2j_2}), \dots, f(x_{i_nj_n}) \right] = \sum_{m_1, \dots, m_n} a_{m_1} \cdots a_{m_n} c_n\left[ x_{i_1j_1}^{m_1}, x_{i_2j_2}^{m_2}, \dots, x_{i_nj_n}^{m_n} \right],
\end{align*}
and thus, we need to understand cumulants of the form $c_n\left[ x_{i_1j_1}^{m_1}, x_{i_2j_2}^{m_2}, \dots, x_{i_nj_n}^{m_n} \right]$.

The main tool for transferring properties from $X_N$ to $Y_N$ is the formula of Leonov and Shirayev, which allows to deal with cumulants where the arguments are products. We recall this formula in the appendix in \Cref{Leonov}. In our present setting, the partition $\tau$ from that formula is
$$\tau=\{\{1,\dots,m_1\},\{m_1+1,\dots,m_1+m_2\},\dots,\{m_1+\dots+m_{n-1}+1,\dots,m_1+\dots +m_n\}\}$$

\begin{proposition}\label{lemma:cyclicycumulants}
    Let $f(x) = \sum_{k=1}^d a_k x^k $ be a polynomial of degree $d$. Then, the following holds for the above nonlinear random matrix:
    \begin{enumerate}
    \item 
    All first cumulants $c_1(y_{ij})$ are equal to zero.
        \item For $n \geq 2$, the joint cumulants of the entries of $Y_N$, $c_n(y_{i_1j_1}, \dots, y_{i_nj_n})$ with $i_1, \dots i_n, j_1, \dots, j_n \in \left[ N \right]$, such that $i_k \neq j_k, \forall k = 1, \dots, n$ are non-vanishing only if the indices form cycles.
    \end{enumerate}
\end{proposition}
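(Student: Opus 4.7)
The two parts of the statement are handled separately.

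Part (1) is immediate: $y_{ii}=0$ by construction, and for $i\neq j$ the centeredness assumption gives $\EE[y_{ij}]=N^{-1/2}\EE[f(\sqrt{N}x_{ij})]=0$.

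For Part (2) the plan is to reduce cumulants of $Y_N$ to single-entry cumulants of $X_N$ by two classical tools and then invoke the \emph{exact} vanishing of Proposition~\ref{lemma:orthcumulants}(2). Writing $y_{ij}=\sum_{k=1}^d a_k N^{(k-1)/2}x_{ij}^k$ for $i\neq j$, multilinearity of the classical cumulant yields
\[
c_n(y_{i_1j_1},\dots,y_{i_nj_n})=\sum_{m_1,\dots,m_n\geq 1}\Bigl(\prod_s a_{m_s}N^{(m_s-1)/2}\Bigr)\,c_n(x_{i_1j_1}^{m_1},\dots,x_{i_nj_n}^{m_n}),
\]
and the Leonov--Shirayev formula (recalled in the appendix) rewrites each inner cumulant of products as $\sum_{\sigma\vee\tau=1_{|m|}}c_\sigma$, a sum of products of single-entry cumulants of $X_N$, one factor per block of $\sigma$, with $\tau$ grouping the $m_s$ copies of each $x_{i_sj_s}$.

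By Proposition~\ref{lemma:orthcumulants}(2) every one of these block cumulants vanishes \emph{identically} unless the pairs of indices inside the block form cycles, i.e.\ unless the intra-block multigraph is Eulerian. The exact (finite-$N$) vanishing there originates in the invariance of the distribution of $X_N$ under conjugation by every diagonal sign matrix $D_\epsilon=\mathrm{diag}(\epsilon_1,\dots,\epsilon_N)\in O(N)$. My plan is to apply this same sign-flip symmetry once more at the level of the whole $Y_N$-cumulant: under $D_\epsilon$-conjugation of $X_N$ each monomial transforms as $x_{ij}^k\mapsto(\epsilon_i\epsilon_j)^k x_{ij}^k$, producing the family of identities
\[
c_n(y_{i_1j_1},\dots,y_{i_nj_n})=\sum_{m_1,\dots,m_n\geq 1}\Bigl(\prod_s a_{m_s}N^{(m_s-1)/2}(\epsilon_{i_s}\epsilon_{j_s})^{m_s}\Bigr)\,c_n(x_{i_1j_1}^{m_1},\dots,x_{i_nj_n}^{m_n})
\]
one for each $\epsilon\in\{\pm1\}^N$. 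Combined with the block-level Eulerian restriction above and a Fourier/character averaging on $\{\pm1\}^N$ that isolates a prescribed vertex $v^\ast$, this should force the surviving contributions to come only from outer index configurations in which every vertex $v^\ast$ has even outer degree, i.e.\ precisely the configurations where the outer indices form cycles.

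The main obstacle is the final combinatorial bookkeeping. The per-block Eulerian condition does not individually imply the outer Eulerian condition, because the polynomial multiplicities $m_s$ can render an intra-block multigraph Eulerian while the underlying outer multigraph fails to be so (for example, doubling every $m_s$). The exact cancellation therefore cannot be read off block by block; it must be extracted from the interplay between the global $\{\pm1\}^N$-symmetry, the Leonov--Shirayev summation over $\sigma$, and the polynomial coefficients $a_k$. Closing this step cleanly—showing that for any prescribed odd-degree vertex $v^\ast$ the character average over $\epsilon$ kills the entire right-hand side term by term in $(m,\sigma)$—is where the actual work of the proof will lie.
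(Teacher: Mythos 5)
Your part (1) and your set-up for part (2) --- multilinearity in the monomial degrees, the Leonov--Shiryaev expansion over inner partitions $\sigma$ with $\sigma\vee\tau=1$, and the per-block vanishing supplied by Proposition~\ref{lemma:orthcumulants} --- coincide with what the paper does. The difference is that you stop and flag the remaining step as open, whereas the paper disposes of it in a single sentence (``if none of these cases occurs, then by orthogonal invariance the corresponding cumulant will be zero''). The obstacle you name is genuine: the block-level cycle condition only forces every vertex to have even degree in the multigraph in which the edge $\{i_s,j_s\}$ carries multiplicity $m_s$, while the assertion to be proved concerns the multigraph with multiplicity $1$. Your proposed repair cannot close this. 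Under conjugation by a diagonal sign matrix the term indexed by $(m_1,\dots,m_n)$ picks up the character $\prod_s(\epsilon_{i_s}\epsilon_{j_s})^{m_s}$, so averaging over $\epsilon\in\{\pm1\}^N$ enforces precisely that $\sum_{s:\,v\in\{i_s,j_s\}}m_s$ is even at every vertex $v$ --- the same total-multiplicity parity already implied by the aggregated block-level Eulerian condition --- and carries no information about the outer degrees $\#\{s:\,v\in\{i_s,j_s\}\}$. No character of $\{\pm1\}^N$ separates the two parities.

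In fact no argument can close the gap, because the statement read literally (with the paper's definition of ``forming cycles'') fails. Take $f(x)=x^2$ (centering is irrelevant for $n\geq 2$) and the index pairs $(1,2),(2,3)$, which do not form cycles. Then
\begin{align*}
c_2(y_{12},y_{23})=N\,c_2(x_{12}^2,x_{23}^2)=N\bigl(c_4(x_{12},x_{12},x_{23},x_{23})+2\,c_2(x_{12},x_{23})^2\bigr),
\end{align*}
since every other Leonov--Shiryaev term contains a singleton block and vanishes because $c_1(x_{ij})=0$. The cross term $c_2(x_{12},x_{23})$ vanishes exactly (it is odd under the sign flip $\epsilon_1\mapsto-\epsilon_1$), but the four copies $x_{12},x_{12},x_{23},x_{23}$ can be arranged into the closed walk $1\to2\to3\to2\to1$, so Proposition~\ref{lemma:orthcumulants}(3) gives $c_4(x_{12},x_{12},x_{23},x_{23})=\kk_4N^{-3}+o(N^{-3})$, which is nonzero for instance for $X_N=A_N^2$ with $A_N$ a GOE (there $\kk_4=1$). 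Hence $c_2(y_{12},y_{23})\sim\kk_4N^{-2}\neq0$. What your block-level argument (and the paper's) actually establishes is the weaker statement that the cumulant vanishes unless the edges $\{i_s,j_s\}$, each taken with some multiplicity between $1$ and $d$, can be arranged into cycles; that weaker form is also all that is needed later, since in the moment computations the outer index tuples always come from closed trace-walks and the extra terms above are subleading there. So: your diagnosis of the gap is correct, your proposed fix provably cannot work, and the honest resolution is to weaken the statement rather than to search for the missing cancellation.
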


\begin{proof}
    The first cumulants of the $y_{ij}$ are all zero, either by the definition $y_{ii}=0$ or by the assumption on our polynomial.

    The cyclicity of the joint cumulants of $Y_N$ is a direct consequence of the formula of Leonov and Shiryaev. A partition $\pi$ that contributes to $c_n$ has to satisfy $\pi \vee \tau = 1$ and thus has to connect the different blocks of $\tau$. However, since we set the diagonal entries to zero, we have that $i_k \neq j_k$ or otherwise the corresponding cumulant will be zero. Since the random matrix $X_N$ is orthogonally invariant, we know, by \Cref{lemma:orthcumulants},that the blocks of $\pi$ need to connect indices that form cycles. And in case the indices form a full cycle we know that $\pi$ has to contain at least one such full cycle in order to give a non-zero contribution.

    Now, let us check what we get if the index set $i_1,i_2, \dots, i_n$ has subcycles, that is indices repeat. In this case, by the formula of Leonov and Shiryaev for $c_n(y_{i_1i_2}, \dots, y_{i_ni_1} )$, we again need to ensure, that $\pi \vee \tau = 1$, that is if we follow the connections between blocks, we can move from any block into any other block. But if now, we only want to have $\pi$ connecting the indices within each subcycle, then the former condition cannot be satisfied: If $\pi$ contained blocks, such that each block only connects indices forming strict subcycles, then $\pi$ does not fulfill the requirements of the formula of Leonov and Shiryaev. In consequence, there has to be at least one block of $\pi$ connecting one or several of these cycles. But now, by cyclity of the cumulants of $X_N$, we know that the indices of both blocks have to form one connected cycle (with subcycles). Inductively, we thus see that the partition $\pi$ has to contain at least one full cycle of length $n$. Now, if we have several disconnected cycles (that are allowed to possess subcycles themselves), then by the same arguments as before the partition $\pi$ has to connect the blocks corresponding to the disjoint cycles, which can be satisfied e.g. by a choosing one cycle from each the disjoint cycles. Note that it might also be possible to choose one element of the cycle twice if the degrees allow for this. These contributions will be subleading, see below for a more refined analysis. Finally, if none of these cases occurs, then by orthogonal invariance the corresponding cumulant will be zero.
\end{proof}

Now we will discuss the orders of the cumulants and their asymptotic contribution in the calculation of macroscopic traces.

\begin{theorem}\label{thm:cumulants}
  Let $X_N$ be a symmetric orthogonally invariant random matrix ensemble which has a limit distribution of all orders (that is, all limits as in \Cref{eq:limitsofcorrelations} exist).
   Let, for a fixed choice of a polynomial $f$, $Y_N$ be defined as in \Cref{eq:def-yij}.
 Then the classical cumulants 
$c_n(y_{i_1j_1},y_{i_2j_2},\dots,y_{i_nj_n})$
of the entries $y_{ij}$ of the random matrix $Y_N=(y_{ij})_{i,j=1}^N$ have the following properties.

    \begin{enumerate}
        \item The only cumulants which make asymptotically a contribution in the calculation of the moments of $\EE[\tr(Y_N^n)]$ are those with a cyclic index structure, that is those of the form $c_n(y_{i_1i_2},y_{i_2i_3},\dots,y_{i_ni_1}$); also the contribution of such cyclic indices, for which some of the indices in the cycle are the same, will vanish; thus we only have to deal with cyclic indices where all the indices in the cycle are different; for those the cumulant does not depend on the actual value of the indices and thus we can define the limits
    $$\kk_n^f:=\lim_{N\to\infty} N^{n-1}c_n(y_{i_1i_2},y_{i_2i_3},\dots,y_{i_ni_1})\qquad
    \text{all $i_k$ distinct}.$$
    Those $\kk_n^f$ are thus the free cumulants of the asymptotic eigenvalue distribution of the random matrix $Y_N$.
    \item We have the following relation between the free cumulants $\kk_n$ of the asymptotic eigenvalue distribution of $X_N$ and the free cumulants $\kk_n^f$ of the asymptotic eigenvalue distribution of the random matrix $Y_N$:
    \begin{enumerate}
    \item  
    $\kk_1^f=0$;
    \item
    $\kk_2^f=c_2(f(g),f(g))=\EE[f(g)^2]-\EE[f(g)]^2$;
    \item for $n\geq 3$, $$\kk_n^f=\kk_n\cdot \EE[f'(g)]^n;$$
    \end{enumerate}
where $g$ is a Gaussian random variable with mean zero and variance $\kk_2$.
\end{enumerate}
\end{theorem}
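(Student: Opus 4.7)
The overall strategy is to reduce each joint cumulant of entries of $Y_N$ to an expression controlled by cumulants of entries of $X_N$, whose structure is pinned down by \Cref{lemma:orthcumulants}. Writing $f(x)=\sum_m a_m x^m$ and using multilinearity,
$$c_n(y_{i_1j_1},\dots,y_{i_nj_n}) = \sum_{m_1,\dots,m_n} \Bigl(\prod_k a_{m_k}\Bigr)\, N^{(M-n)/2}\, c_n\bigl(x_{i_1j_1}^{m_1},\dots,x_{i_nj_n}^{m_n}\bigr),$$
where $M=m_1+\cdots+m_n$. The Leonov--Shiryaev formula (recalled in the appendix) expands each $c_n(x^{m_1},\dots,x^{m_n})$ as $\sum_{\pi:\pi\vee\tau=1_M} c_\pi$, with $\tau$ grouping the arguments by the $m_k$'s. \Cref{lemma:orthcumulants} then gives the order of every $c_\pi$: a $\pi$-block $V$ whose indices form a single (possibly degenerate) cycle contributes $N^{1-|V|}$; $r\ge 2$ disjoint cycles cost an extra $N^{-(r-1)}$; and non-Eulerian patterns vanish identically. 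The task reduces to identifying the $\pi$'s attaining the leading order $N^{1-n}$ and matching the resulting combinatorial sum with the asserted expressions.

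For part~(1), \Cref{lemma:cyclicycumulants} already forces $c_n(y_{i_1 j_1},\dots,y_{i_n j_n})$ to vanish unless the indices form cycles, and the Leonov--Shiryaev expansion upgrades this to the order estimate $O(N^{1-n})$ for a single cycle and $O(N^{2-r-n})$ for $r\ge 2$ disjoint cycles. Substituting into the classical moment-cumulant expansion of $\EE[\tr(Y_N^n)] = \tfrac1N\sum_{i_1,\dots,i_n}\EE[y_{i_1 i_2}\cdots y_{i_n i_1}]$, the same combinatorial bookkeeping that proves the free moment-cumulant formula for $X_N$ (in \Cref{lemma:orthcumulants}) applies verbatim to $Y_N$: only non-crossing partitions survive, and cyclic tuples with a repeated index leave only $O(N^{n-1})$ admissible choices, contributing $O(1/N)$. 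Orthogonal invariance under permutation matrices makes the cumulant on distinct tuples independent of the chosen indices, so $\kk_n^f := \lim N^{n-1} c_n(y_{i_1 i_2},\dots,y_{i_n i_1})$ is well defined and coincides with the $n$-th free cumulant of the limiting spectrum of $Y_N$.

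The main content is part~(2)(c). For $n\ge 3$ and a distinct cyclic tuple, the leading $N^{1-n}$ contribution to $c_n(y_{i_1 i_2}, \dots, y_{i_n i_1})$ comes from a unique ``cactus'' $\pi$-structure: one spanning block $V_0$ of size $n$ selecting exactly one element from each $\tau$-block (yielding the cyclic $x$-cumulant $c_n(x_{i_1 i_2}, \dots, x_{i_n i_1}) \sim N^{1-n} \kk_n$), together with pair blocks entirely inside individual $\tau$-blocks (each giving $c_2(x_{ij}, x_{ij}) \sim N^{-1}\kk_2$). Exponent bookkeeping shows the total order of a $c_\pi$-contribution equals $1-(M+n)/2+|\pi|$, so the leading budget is $|\pi| = 1 + (M-n)/2$, and the cactus is the only configuration meeting it: any larger spanning block has $|V_0|\ge n+2$ by the Eulerian parity constraint (all multiplicities $v_k$ in a block must share parity) and loses an $N^{-1}$ factor; two spanning blocks cost an additional $N^{1-n/2}$ for $n\ge 3$; and inside a single $\tau$-block only even-sized $\pi$-blocks survive (by part~(1) of \Cref{lemma:orthcumulants}), so the $m_k-1$ leftover elements pair up iff $m_k$ is odd, while even $m_k$ contribute $0$ at leading order. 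With $m_k\cdot (m_k-2)!! = m_k!!$ arrangements per $\tau$-block, the sum factorizes to
$$\kk_n^f = \kk_n \cdot \prod_{k=1}^n \Bigl(\sum_{m \text{ odd}} a_m\, m!!\, \kk_2^{(m-1)/2}\Bigr) = \kk_n \cdot \EE[f'(g)]^n,$$
where the final equality uses $\EE[g^{m-1}] = (m-2)!!\,\kk_2^{(m-1)/2}$ for odd $m$ (and $0$ for even $m$), so that $\sum_m m\, a_m\, \EE[g^{m-1}] = \EE[f'(g)]$.

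Item~(2)(a) is immediate from the Centeredness Assumption and the convention $y_{ii}=0$. For (2)(b), rather than re-doing the combinatorial analysis at $n=2$ (where multiple spanning pair blocks compete at the same order, complicating the cactus picture), the direct route is cleaner: $c_2(y_{ij},y_{ji}) = \tfrac{1}{N}\mathrm{Var}(f(\sqrt N x_{ij}))$, and applying part~(3) of \Cref{lemma:orthcumulants} to the degenerate cycle $(x_{ij}, x_{ji}, x_{ij}, \dots)$ yields $c_p(\sqrt N x_{ij}, \dots, \sqrt N x_{ij}) = N^{1-p/2}\kk_p \to 0$ for $p\ge 3$, so $\sqrt N x_{ij}$ is asymptotically $N(0,\kk_2)$ in moments and $N c_2(y_{ij}, y_{ij}) \to \mathrm{Var}(f(g)) = c_2(f(g), f(g))$. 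The hard part, I expect, will be the exponent bookkeeping in (2)(c): giving a clean, uniform argument that rules out every intermediate $\pi$-configuration at leading order for $n\ge 3$ (mixed multiplicities in the spanning block, non-pair blocks inside a $\tau$-block, several spanning blocks) rests delicately on the exact Eulerian constraint from orthogonal invariance together with the vanishing of odd cumulants of single off-diagonal entries.
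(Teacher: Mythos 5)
Your proposal tracks the paper's argument quite closely for part~(2)(c): both use multilinearity to reduce to monomials, apply the Leonov--Shiryaev expansion, count orders block-by-block via \Cref{lemma:orthcumulants}, and identify the leading configuration as one spanning cyclic block of size $n$ plus pair blocks within each $\tau$-group. Your parity observation (``all multiplicities $v_k$ in a block must share parity, so the next admissible spanning block has size $\geq n+2$'') is a nice sharpening of the paper's argument that one should simply maximize the number of blocks; both give the same conclusion. For part~(2)(b) you take a genuinely different and cleaner route: instead of counting pairings directly (which the paper does, tracking which pairings connect the two $\tau$-groups), you note $Nc_2(y_{ij},y_{ij})=\mathrm{Var}(f(\sqrt N x_{ij}))$ and use the vanishing of the higher cumulants $c_p(\sqrt N x_{ij},\dots)=O(N^{1-p/2})$, $p\geq 3$, to conclude that $\sqrt N x_{ij}$ is asymptotically Gaussian. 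This avoids the subgroup-pairing bookkeeping entirely.

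The real gap is in part~(1). You assert that the combinatorial bookkeeping from the $X_N$ case ``applies verbatim'' and that repeated-index cyclic tuples contribute $O(1/N)$ because they ``leave only $O(N^{n-1})$ admissible choices'' while the cumulant is $O(N^{1-n})$. But for $Y_N$ this accounting is not right: when indices repeat, the cumulant of the entries of $Y_N$ can be \emph{strictly larger} than for $X_N$. The paper's \Cref{ex:cyclepairing} exhibits $c_4(y_{12},y_{21},y_{12},y_{21})\sim N^{-2}$ (vs.~$N^{-3}$ for $X_N$), and in general the order can be as large as $N^{-n/2}$. The reason the overall contribution to $\EE[\tr Y_N^n]$ is still $o(1)$ is that precisely when the cumulant order increases, the number of admissible index tuples decreases to compensate (in that example, $N^2$ choices $\times\,N^{-2}\times N^{-1}=N^{-1}$, not $N^{3}\times N^{-3}\times N^{-1}$). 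The paper spends the bulk of the proof establishing this balance --- the bound $N^{d-\sum_j d_j/2-n/2}$ on the cumulant order versus $N^{\sum_j d_j - d}$ on the number of index tuples for a fixed graph $G_i$, and the separate analysis of several disconnected cycles in \Cref{section4} --- and you need this to rule out the subcycle and crossing configurations. Likewise, the statement that ``only non-crossing partitions survive'' is asserted rather than derived; the paper makes this precise via the cactus-graph discussion (crossing $\pi$ forces either a repeated edge or connected disjoint cycles, both subleading). Without these estimates, part~(1) and hence the identification of $\kk_n^f$ as free cumulants is not established.

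Minor: your exponent formula ``total order $=1-(M+n)/2+|\pi|$'' should read $|\pi|-(M+n)/2$ (the leading budget $|\pi|=1+(M-n)/2$ that you write is correct, so this is just a slip). Also the paper's intermediate line $\kk_n\cdot m_1\kk_2\EE[g^{m_1-1}]\cdots$ has a stray $\kk_2$; your version of the arithmetic (arriving at $\kk_n\prod_k\EE[f_k'(g)]$ with the correct powers of $\kk_2$ absorbed into $\EE[g^{m_k-1}]$) is the right one.
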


\begin{proof}
Let us first control the leading order of our cumulants $c_n(y_{i_1i_2},y_{i_2i_3},\dots,y_{i_ni_1})$ with cyclic index structure  for monomial activations $f_1(x) = x^{m_1}, \dots, f_n(x) = x^{m_n}$, by again invoking the formula of Leonov and Shiryaev, \Cref{Leonov}. 
\begin{align*}
N^{-n/2} &c_n\left[f_1(\sqrt N x_{i_1i_2}),f_2(\sqrt N x_{i_2i_3}),\dots,f_n(\sqrt N x_{i_ni_1})\right]\\
&=N^{-n/2} c_n\left[(\sqrt N x_{i_1i_2})^{m_1},(\sqrt N x_{i_2i_3})^{m_2},\dots,(\sqrt N x_{i_ni_1})^{m_n}\right]\\
&=N^{(m_1+m_2+\cdots+m_n-n)/2} c_n\left[x_{i_1i_2}^{m_1},x_{i_2i_3}^{m_2},\dots,x_{i_ni_1}^{m_n}\right]\\
&= N^{(m -n)/2}c_n \left[x_{i_1i_2}^{m_1}, x_{i_2i_3}^{m_2}, \dots, x_{i_ni_1}^{m_n}\right] \\ &= N^{(m -n)/2}\sum_{\pi \in \mathcal{P}(m), \pi \vee \tau = 1_{m}} c_{\pi}\left[ x_{i_1i_2}, \dots , x_{i_1i_2}, x_{i_2i_3}, \dots ,  x_{i_ni_1}, \dots, x_{i_ni_1}
\right],   
\end{align*}
where $m = \sum_{i=1}^n m_i$ and each $x_{i_li_{l+1}}$ shows up $m_l$ times in $c_\pi\left[ \dots \right]$.

In the following we will first restrict to the case where all the $i_k$ are distinct.
In this case, since the indices do not have subcycles, only full cycles $(i_1, \dots, i_n)$ of length $n$ can make a contribution and we need indeed at least one such full cycle to connect elements from the different groups of $\tau$. Assume now, that in the above formula, the partition $\pi$ consists of $r$ full cycles of length $n$ and $s_j, j = 2, \dots, n-1$ blocks of size $j$ within each group of $\tau$. Then,
\begin{align*}
    rn + \sum_{j = 2}^{n-1}s_j j = m,
\end{align*}
and the corresponding cumulant $c_{\pi}$ will be of order $\mathcal{O}(N^t)$, where
\begin{align*}
    t=r(1-n) + \sum_{j = 2}^{n-1} s_j(1 - j) = {r + \sum_{j=2}^{n-1} s_j - m} = {\sum_{j=2}^n s_j - m},
\end{align*}
with the convention that $r = s_n$.

Since $m = m_1 + \dots + m_n$ is fixed, we see that we need to maximize the number of blocks of $\pi$ in order to obtain a leading order contribution. This in turn means that $r$ should be as small as possible, i.e., we have $r=1$ (which is necessary since we need at least one such full cycle to satisfy the condition $\pi\vee\tau=1_m$ in the formula of Leonov and Shiryaev); and the rest are all pairings since, by \Cref{lemma:orthcumulants}, $c_1(x_{ij}) = 0$ for $i\not=j$, thus blocks of size 1 make no contribution and only blocks of size $\geq 2$ are relevant. From this we conclude that the leading order comes from partitions $\pi$ with $\frac{m-n}{2}$ blocks of size 2 and exactly one cycle of length $n$. For those the total leading order is again $\mathcal{O}(N^{1-n})$: $c_\pi$ will be of order $\mathcal{O}(N^{\frac{m - n}{2} +1 - m} )$, and including the above prefactor of $N^{(m-n)/2} $ one obtains the order
$\mathcal{O}(N^{1-n})$.

Let us now take a closer look on second cumulants.
\begin{align*}
    \kk_2^{f_1,f_2}
    &=\lim_{N\to\infty} N\cdot N^{(m_1+m_2-2)/2} c_2(x_{12}^{m_1},x_{21}^{m_2}) \\
    &=\lim_{N\to\infty} N^{(m_1+m_2)/2} c_2(x_{12}^{m_1},x_{21}^{m_2})  
\end{align*}
By the above results the leading order case is to have all blocks of size 2 and we can connect each $x$-argument with each other $x$-argument, because $x_{12}=x_{21}$, thus 
$$c_2(x_{12},x_{21})=c_2(x_{12},x_{21})=c_2(x_{21},x_{12})=c_2(x_{21},x_{21}).$$
Each of them needs a factor $N$ to make a limiting multiplicative contribution $\kk_2$. The only constraint from the formula of Leonov and Shiryaev is that the group of the first $m_1$ indices must be connected by some pair with the group of the last $m_2$ indices. If $m_1$ and $m_2$ are odd then this is automatic, if both are even, then we have to subtract from all pairings the ones which connect only within the two subgroups. If the parities of $m_1$ and $m_2$ are different, then there are no pairings at all. Since the number of pairings counts the moments of a Gaussian variable, we get in this case that
\begin{align*}
    \kk_2^{f_1,f_2} &= \lim_{N \rightarrow \infty }  N^{(m_1+m_2)/2} c_2(x_{12}^{m_1},x_{21}^{m_2})\\
    &= \kk_2^{(m_1+m_2)/2}(\mathcal{P}_2(m_1 + m_2) - (\mathcal{P}_2(m_1) \times \mathcal{P}_2(m_2))) \\ &= \mathbb{E}\left[ g^{m_1+m_2}\right] - \mathbb{E}\left[ g^{m_1}\right] \mathbb{E}\left[ g^{m_2}\right]\\
    &=c_2(f_1(g),f_2(g)),
\end{align*}
where $g$ is a Gaussian random variable with mean zero and variance $\kk_2$.
Hence, by the multilinearity of the cumulants, we also get for our polynomial $f$ that
$$\kk_2^f=c_2(f(g),f(g)).$$

Consider now the case $n\geq 3$. Then we have for our monomials
$$\kk_n^{f_1,\dots,f_n}= \lim_{N \rightarrow \infty}N^{n-1}\cdot N^{(m_1+m_2+\cdots+m_n-n)/2} c_n\left[x_{12}^{m_1},x_{23}^{m_2},\dots,x_{n1}^{m_n}\right].
$$
As noted above in order to satisfy the connecting condition we need at least one cyclic block of length $n$ which contains one element from each group of $\tau$ (and which eats up a factor $N^{n-1}$ to produce a $\kk_n$); recall that for this argument we use that all our indices are distinct. The leading order is then given by pairing the remaining elements, by orthogonal invariance necessarily each of them 
within one of the groups. Each such pair needs asymptotically a factor $N$ to produce a contribution $\kk_2$. The cyclic block uses $n$ elements, so we still have to connect $m_1+\cdots m_n-n$ indices, and doing this with pairings will exactly match the remaining  $N^{(m_1+m_2+\cdots+m_n-n)/2}$. So it remains to count the number of possibilities to choose one cyclic block and pairs for the rest.

The cyclic block has $m_1$ possibilities to choose its element from the first group, $m_2$ possibilities to choose its element from the second block and so on, thus there are $m_1\cdot m_2\cdots m_n$ possibilities for choosing a cyclic block. If this is chosen, then the $i$-th group contains $m_i-1$ elements which have to be paired among themselves. Counting the number of those pairings, and weighting each such pair with a factor $\frac{\kk_2}{N}$ gives in the limit $N \rightarrow \infty$ again the same as the $(m_i-1)$-th moment of a Gauss variable of variance $\kk_2$, thus we get:
$$\kk_n^{f_1,\dots,f_n}=\kk_n\cdot m_1 \kk_2 \EE[g^{m_1-1}]\cdots m_n \kk_2 \EE[g^{m_n-1}]=
\kk_n \EE[f_1'(g)]\cdots \EE[f_n'(g)]
;$$
and so, by multilinearity, for our polynomial $f$
$$\kk_n^f=\kk_n\cdot \EE[f'(g)]^n.$$

It remains to deal with the cases where we have 
subcycles or several disconnected cycles in the index. 
In those cases the application of the non-linearity might have a non-trivial effect on the leading order and the situation is getting more complicated. Nevertheless, we will see that those terms will still not make a contribution to the asymptotic calculation of the moments of the matrix $Y_N$.
Before diving deeper into this analysis, we need some preparations. 
\end{proof}

Let us borrow some definitions from \cite{anderson2010introduction} in order to organize the following arguments.

\begin{definition}[$\mathcal{S}$-word]
Given a set $\mathcal{S}$, an \emph{$\mathcal{S}$-letter} $s$ is simply an element of $\mathcal{S}$. An \emph{$\mathcal{S}$-word} $ w $ is a finite sequence of letters $s_1 \cdots s_n, n \in \mathbb{N}$, at least one letter long. An $\mathcal{S}$-word $w$ is \emph{closed} if its first and last letters are the same. Two $\mathcal{S}$-words $w_1, w_2$  are called \emph{equivalent}, denoted $w_1 \sim w_2$, if there is a bijection on $\mathcal{S}$ that maps one into the other.
\end{definition}

\begin{remark}
    We will in the following interpret our multi-indices $i = (i_1, \dots, i_n)$ as $\mathcal{S}$- words for the set $\mathcal{S} = [ N ]$, or as \cite{anderson2010introduction} calls them, $N$-words.
\end{remark}

\begin{definition}[length, weight, support of a word]
    Let $w = s_1 \cdots s_k$ be an $\mathcal{S}$-word for a set $\mathcal{S}$ with letters $s_i, i =1, \dots, k$. Then, we call $\ell(w) := k$ its \emph{length}; and the \emph{weight} $wt(w)$ of $w$ is the number of different constituent letters, which is the cardinality of the set $\{ s_1, \dots, s_k\}$, . Furthermore, we define the \emph{support} of $w$, $\text{supp}(w)$, to be the set of letters appearing in $w$.
\end{definition}

\begin{definition}[Graph associated with an $\mathcal{S}$-word] \label{def:graph}
    Given a word $ w = s_1 \cdots s_k $, we let $ G_w = (V_w, E_w)$ be the graph with set of vertices $V_w = \text{supp}(w) $ and (undirected) edges $ E_w = \{ \{s_i, s_{i+1}\}, i = 1, \ldots, k-1 \} $. We define the set of self edges as $ E^s_w = \{ e \in E_w : e = \{u, u\}, u \in V_w \} $ and the set of connecting edges as $ E^c_w = E_w \setminus E^s_w $.
\end{definition}

This terminology is specially adapted for the computation of expectations of traces of powers of our matrices. However, this tool is also very suitable for our purpose of describing cyclic cumulants. As \cite{anderson2010introduction} remarks such a graph is connected and by \Cref{lemma:cyclicycumulants} we will only look at graphs which consist of cycles. Furthermore the word/multi-index $w$ induces a path/walk on $G_w$ in the obvious way, which we denote by $p_w$. Let us now turn to the case where the graph $G_w/G_i$ isn't a full cycle anymore.

\begin{proof}[Continuation of Proof]

In the case, where the multi-index $i$ contains several connected cycles, the order of magnitude in $N$ of the cumulant may change when applying a nonlinear transformation as specified above. This is the case if the sets $\{ i_k, i_{k+1} \}$ and $\{ i_l, i_{l+1} \}$ coincide for some $l \neq k$, which can be translated to the property that the loops in the walk $p_i$ on $G_i$ induced by $i$ share one or more edges. Asymptotically, these contributions will be negligible as we will see below, however let us investigate what their leading order looks like. Assuming that the graph $G_i$ consists of $c$ many cycles $c_j, j = 1, \dots, c$ and suppose furthermore that in the walk $p_i$  the edge $e \in E_w^c$ appears $N_e^{i}$ times. By the formula of Leonov and Shiryaev we first have to ensure that the connection between the groups of $\tau$ allow one to reach any other group following the blocks of the partition $\pi$. To ensure that the partition $\pi$ contributes to the formula of Leonov and Shiryaev we first fix for any of the different loops in $p_i$ one element, each from different groups of $\tau$ by the cyclicity of the cumulants of $X_N$ this connection has to be such that the edges of the connected blocks form cycles (and in the leading order exactly one, since otherwise we could split the cycles giving a larger order) and each constituent cycle of the graph has to show up at least once. If the condition $\pi \vee \tau = 1$ isn't already achieved this can be satisfied by choosing connections for the blocks corresponding to the same edge $e$ for example by a block which contains exactly one element from each block. (Note that this requires the degrees to be large enough, if this is not the case, then one can of course also try to put all elements into one block and \Cref{lemma:orthcumulants} applies.) It is clear that this construction can lead to partitions $\pi$ which are crossing, see below for the example if a loop appears several times. The remaining points can now be connected arbitrarily as long as they don't give a factor of zero.

However, since we are interested in the leading order of the cumulant we want to have as much pairings as above, which this construction clearly doesn't yield. Assuming that all powers $m_j$ are large enough (w.r.t.~$n$), we may connect the groups of $\tau$ by choosing each of the different subcycles once in the way we just specified and then pair the remaining points if possible. This shows that the order of the cumulant will be at most
\begin{align*}
    N^{(m-n)/2} \cdot N^{d - \sum_j d_j - (m -\sum_j d_j )/2} = N^{d - \sum_j d_j/2 - n/2},
\end{align*}
where $d$ denotes the number of \emph{distinct} cycles with respective length $d_j, j = 1, \dots , d$, such that this term can be as large as $N^{-n/2}$ (see \Cref{ex:cyclepairing}). This estimate also holds if the $m_j$ are not large enough and in this case this order will be even smaller. Furthermore it is clear from our above considerations and \Cref{lemma:orthcumulants}, that if $\pi$ contains a block which connects two or more disconnected cycles the contribution of $c_{\pi}$ the order of magnitude will be even smaller: If $\pi'$ was another partition, such that we split the blocks of $\pi$ in question into two or more different blocks according to the disjoint cycles, then the order will increase and we can apply our estimate again. We give a more detailed analysis of the case when disjoint cycles are being connected in \Cref{section4}.

If the subcycles are all different and don't share edges we still need a full cycle in $\pi$ as we saw in \Cref{lemma:cyclicycumulants} and will still have the possibility to choose in total $m_1 \cdots m_n$ points for it within each block. Suppose there were $s$ many different such pairs, such that in total this index pair shows up $r_i, i = 1, \dots , s$ many times and denote $m'_i = r_i -1 $. After choosing the elements of the full cycle there will remain for each $\{ i_k, i_{k+1} \}$-pair $\mathcal{P}_2(m'_i)$ many possibilities for a pairing, such that in the limit $N \rightarrow \infty$ we will obtain $$\lim_{N\rightarrow \infty} N^{n-1} c_n(x_{i_1i_2}, \dots, x_{i_ni_1} ) = \kk_n \cdot m_1 \cdots m_n \prod_{i=1}^s \mathcal{P}_2(m'_i).$$ The resulting limit will thus be the same as if we were computing the joint cumulant $c_n(x_{i_1i_2} g_{i_1i_2}^{m_1-1}, x_{i_2i_3}g_{i_2i_3}^{m_2-1} , \dots, x_{i_ni_1} g_{i_ni_1}^{m_n-1} )$, where $G = (g_{ij})$ is a GOE matrix independent from $X_N$, where each off-diagonal entry has variance $\frac{\kk_2}{N}$. Careful analysis also reveals that the leading order can be interpreted similarly if the cycles repeat and/or share edges despite the fact now the leading order is larger than $\mathcal{O}(N^{1-n})$.

Let us now, finally, check that indeed the quantities $\kk_n^f$ define the free cumulants of the limiting spectral distribution of $Y_N$. For this consider the computation of an $n$-th moment of $Y_N$:
\begin{align*}  
    \EE[\tr(Y_N^n)]=\frac 1N\sum_{i_1,\dots,i_n=1}^N \EE[y_{i_1i_2}y_{i_2i_3}\cdots y_{i_ni_1}].
\end{align*}
Now, we can apply the moment cumulant-formula for the entries of $Y_N$ and arrange the terms in the sum according to how many blocks the partition $\pi$ has:
\begin{align*}
    \frac{1}{N}\sum_{i_1,\dots,i_n=1}^N \EE[y_{i_1i_2}&y_{i_2i_3}\cdots y_{i_ni_1}]\\ &= \frac{1}{N}\sum_{i_1,\dots,i_n=1}^N\ \sum_{\pi \in \mathcal{P}(n)}c_\pi \left[y_{i_1i_2},y_{i_2i_3}, \dots, y_{i_ni_1} \right]\\
    &= \frac{1}{N}\sum_{i_1,\dots,i_n=1}^N\ \sum_{k = 1}^n \ \sum_{\substack{\pi \in \mathcal{P}(n)\\ \#\pi = k}} c_\pi \left[y_{i_1i_2},y_{i_2i_3}, \dots, y_{i_ni_1} \right] \\
     &= \frac{1}{N}\sum_{k = 1}^n\  \sum_{\substack{\pi \in \mathcal{P}(n)\\ \#\pi = k}} \ \sum_{i_1,\dots,i_n=1}^N c_\pi \left[y_{i_1i_2},y_{i_2i_3}, \dots, y_{i_ni_1} \right].
\end{align*}

The above results require the blocks of $\pi$ to connect the $y_{i_{k}i_{k+1}} $ in such a way that the indices of the connected elements form several cycles. Further we know, that the leading order will be the decomposition of the multiindex into its maximum number of cycles. This follows essentially from the same argument as above, and we saw that this corresponds to those situations in which a partition $\pi$ in the formula of Leonov and Shiryaev has the maximal number of blocks. In this case each subcycle has to appear once: $\#\pi = \# (\text{cycles in } i)$ and the above decomposition actually is the decomposition (at least asymptotically) into the number of cycles of the indices. A very similar observation had already been made for orthogonally invariant random matrices in \cite{maillard2019high}; note however, that our $Y_N$ are not orthogonally invariant, see also the remark below.

Now, we can depict a contribution of a multi-index graphically as we did in \Cref{def:graph}. Say we are interested in computing the $n$-th moment and the multi-index $i = (i_1,i_2, \dots, i_n)$ takes $wt(i)$ many different values. Then, we can depict this contribution by $G_i$ and the walk $p_i$ on it. The graph will be connected \cite{anderson2010introduction} and consists of cycles, which are glued together at some of its vertices . A contribution $c_\pi$ with $\pi$ possessing $k$ many blocks corresponds to a decomposition of this graph into $k$ loops/cycles (note the abuse of nomenclature) since we know that only cyclic cumulants are non-zero for $Y_N$. 

Suppose now, that a sequence $i_{k}, \dots, i_{k+l}  $ in the walk on the graph $G$ appears twice, then clearly a noncrossing contribution of $\pi$ may occur. However, these contributions will be subleading, since the double appearance of indices reduces the number of total vertices of the graph $G$. For this let us further decompose the set of multiindices $i = (1_1, \dots, i_n)$ according to the corresponding graph $G_i$:
\begin{align*}
 \sum_{i_1,\dots,i_n=1}^N c_\pi \left[y_{i_1i_2},y_{i_2i_3}, \dots, y_{i_ni_1} \right] &= \sum_{G} \sum_{\substack{i_1,\dots,i_n=1\\ G = G_i}}^N c_\pi \left[y_{i_1i_2},y_{i_2i_3}, \dots, y_{i_ni_1} \right].
\end{align*}

Assume now, that the partition $\pi$ which appears in the above formula is crossing. This implies, that in the walks $p_i$ corresponding to the contribution $c_\pi(y_{i_1i_2}, \dots, y_{i_ni_1})$ at least one edge must repeat or that several disjoint cycles are being connected by $\pi$. Assuming the first case and without loss of generality that the degrees $m_j$ are large enough, using the conventions from above, we know that the individual contribution of an index in this setting is at most of order $N^{d - \sum_jd_j/2 -n/2}$, where $d, d_j$ depend on $i$. But actually this contribution only depends on the graph $G_i$ and its associated walk $p_i$ by the orthogonal invariance of $X_N$ and the resulting exchangeability of the entries of $Y_N$. There are at most $N^v = N^{\sum_j d_j - d}$ many indices contributing for fixed $G_i/p_i$, where $v$ denotes the number of vertices of $G$. Thus, also including the prefactor ${1}/{N}$ from the normalized trace, and convincing oneself of the obvious fact that $\sum_j d_j \leq n$ we conclude that in the case of repeating edges the total contribution will be of order $\mathcal{O}(N^{-1})$. In case that $\pi$ connects several disjoint cycles, then we already noted that splitting these cycles will increase the order by a factor of $N$ for each 'cut', i.e. the order is a factor $N$ lower. In case $\pi$ is still crossing after removing all such connections we can infer that that this contribution will be subleading. If the resulting partition is noncrossing, then we can deduce that these contributions are also vanishing in the limit $N \rightarrow \infty$ (they are $o_N(N^{1-n})$) by the following discussion together with the above consideration of splitting blocks.

If $\pi$ is noncrossing, let us first focus on the case where the decomposition of $\pi$ into blocks also corresponds to a decomposition of $p_i/ G_i$ into its disjoint cycles. In this case the graph $G$ will be a so-called 'cactus-graph', see \cite{maillard2019high}. Suppose the blocks $V_j$ of $\pi$ have sizes $s_j$, respectively, then $c_\pi(y_{i_1i_2}, \dots, y_{i_ni_1})$ will be of order $N^{k - \sum_j s_j} = N^{k - n}$. Now, there will be $N^{n - k +1}$ many indices contributing, which shows that if $\pi$ connects several disjoint cycles, the associated contribution will be subleading. Furthermore if a block of $\pi$ contained a cycle with subcycles, then again by splitting this block into several blocks with one cycle and our previous considerations we can infer that such contributions are subleading. From this we can conclude that in the limit we obtain:
\begin{align*}
    \lim_{N \rightarrow \infty} \mathbb{E}[\tr (Y_N)^n ] = \sum_{\pi \in NC(n)} \kappa^f_\pi.
\end{align*}
This is precisely the free moment cumulant formula, which finishes the proof.
\end{proof}

\begin{remark}[Noncrossing partitions and cactus graphs]
    Note that a cactus graph can be naturally identified with a noncrossing partition: The cactus graph is, as \cite{maillard2019high} states, a tree made of cycles. The tree structure together with the relative position of the cycles then naturally defines the nesting structure of a noncrossing partition and the length of the cycles the block sizes. In \cite{maillard2019high} it has been observed that for orthogonally invariant ensembles contributions to $\mathbb{E}[\sum_{i, G_i = G} x_i] $ associated to a cactus graph $G$ can be factorized into free cumulants, where the sum runs over all multiindices $i \in [N]^n$, such that for fixed $G$ we have $G = G_i$. This is essentially nothing but the free moment-cumulant formula for orthogonally invariant random matrice; this interpretation of cactus graphs as noncrossing partitions was not spelled out explicitly in \cite{maillard2019high}. Note that \cite{maillard2019high} computes these quantities for $X_N = O D O^T$ as expectation over the orthogonal group and obtains results a.s. with respect to the law of $D$.
\end{remark}

\begin{example}\label{ex:cyclepairing}
Let us here give one example of the situation, where an index contains repeated indices (and thus subcycles), in order to show that indeed, the order of magnitude of a cumulant may attain $N^{n/2}$.

    Consider, for example, the cumulant
    $c_4(y_{12},y_{21},y_{12},y_{21})$.
    This corresponds to a cycle of length four in the index, which has two subcycles of size two each. In the case of our orthogonally invariant matrix $X_N$ the corresponding cumulant
    $c_4(x_{12},x_{21},x_{12},x_{21})$ has leading order $N^{-3}$, corresponding to the cycle of length 4, and a subleading contribution of order $N^{-4}$; thus the leading order is the same as for the situation  $c_4(x_{12},x_{23},x_{34},x_{41})$.
    For the non-linear matrix $Y_N$, however, the repetition of indices makes a difference and having subcycles can actually increase the order compared to the case without subcycles. For concreteness, consider $f(x)=x^3$.
    Then, in the case without subcycles 
    $$c_4(y_{12},y_{23},y_{34},y_{41})
    =N^4 c_4(x_{12} x_{12} x_{12}, x_{23} x_{23} x_{23}, x_{34} x_{34}x_{34}, x_{41} x_{41}x_{41} )$$
    has leading order $N^4 N^{-3}N^{-4}=N^{-3}$ 
    (corresponding to a $\pi$ with one block of length 4 and four blocks of length 2), which is the same order as for $X_N$. However, in the case with subcycles
    $$c_4(y_{12},y_{21},y_{12},y_{21})
    =N^4 c_4(x_{12} x_{12} x_{12}, x_{21}x_{21} x_{21}, x_{12} x_{12}x_{12}, x_{21} x_{21}x_{21} )$$
    has leading order $N^4N^{-6}=N^{-2}$ (corresponding to six blocks of length 2). Although the individual contribution of such a cumulant is bigger than the contribution of the case without repeated indices, the overall effect of such subcycles is negligible in the limit. Whereas in the case without subcycles we have $N^4$ many indices, each of which contributes with order $N^{-3}$, resulting (by also taking into account the factor $1/N$ from the normalized trace) in an asymptotic contribution to $E[\tr(Y_N^4)]$, in the case of subcycles we have only $N^2$ many indices, each of which contributes with an order $N^{-2}$ and thus making no contribution to $E[\tr(Y_N^4)]$ in the limit.

    Note that the cumulant $c_4(y_{12},y_{21},y_{12},y_{21})$ presents the worst situation, since here we can connect the two subcycles by pairings, which results then in the increase of the order compared to the orthogonally invariant case. Consider 
    $c_4(y_{12},y_{21},y_{13},y_{31})$ instead. There we have also two subcycles, but they are only connected (in the graph language) at an vertex, but don't share a common edge. This means that we cannot connect them via pairings, but still need a 4-cycle, or two disconnected 2-cycles for doing so. The first case gives the same order as 
    $c_4(y_{12},y_{23},y_{34},y_{41})\sim N^{-3}$,
    whereas the second case gives the same order as
    $c_4(y_{12},y_{21},y_{34},y_{43})\sim N^{-4}$; 
    thus in this situation there is no increase in the order compared to the orthogonally invariant situation.
    
\end{example}

\section{Gaussian equivalence principle}
We have seen that the free cumulants of the asymptotic eigenvalue distribution of $Y_N$ are given in terms of the ones of the original model $X_N$. Actually, it is very easy to produce a linear model $\hat Y_N$ which has the same asymptotic cumulants as the ones from \Cref{thm:cumulants}, and thus also the same asymptotic eigenvalue distribution as $Y_N$.

\begin{corollary}\label{cor:gaussian-equivalence}
Let $X_N$ and $Y_N$ be as above. Then the non-linear random matrix model $Y_N$ has the same asymptotic eigenvalue distribution as the linear model
$$\hat Y_N:=\theta_1 X_N+\theta_2 Z_N,$$
where $Z_N$ is a symmetric standard \text{\rm GOE} random matrix which is independent from $X_N$ and where
$$\theta_1:=\EE[f'(g)]$$
and
\begin{align*}
\theta_2:&= \sqrt{ \EE[f(g)^2]-\EE[f(g)]^2-\kk_2 \EE[f'(g)]^2}\\
&=
\sqrt{\EE[f(g)^2]-\EE[f(g)]^2-\frac 1{\kk_2}\EE[g \cdot f(g)]^2}
\end{align*}
where $g$ is a Gaussian random variable with mean zero and variance $\kk_2$. 
\end{corollary}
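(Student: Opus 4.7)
The plan is to reduce the corollary to a free-cumulant matching problem. By \Cref{thm:cumulants} the asymptotic eigenvalue distribution of $Y_N$ is characterised by the free cumulants $\kk_n^f$ listed there. Hence it suffices to compute the free cumulants of $\hat Y_N=\theta_1 X_N+\theta_2 Z_N$ and to verify that, for the stated choice of $\theta_1$ and $\theta_2$, these coincide with the $\kk_n^f$.

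First I would compute the free cumulants of $\hat Y_N$. Since $Z_N$ is a GOE, it is orthogonally invariant and independent of $X_N$, and by the standard asymptotic freeness theorem for orthogonally invariant ensembles the matrices $X_N$ and $Z_N$ are asymptotically free. By homogeneity, the free cumulants of $\theta_1 X_N$ are $\theta_1^n\kk_n$; the free cumulants of $\theta_2 Z_N$ vanish for $n\neq 2$ and equal $\theta_2^2$ for $n=2$ (the GOE limit is the standard semicircular, whose only non-zero free cumulant is the second). Additivity of free cumulants under free convolution then gives
\begin{align*}
\kk_1(\hat Y_N)=0,\qquad \kk_2(\hat Y_N)=\theta_1^2\kk_2+\theta_2^2,\qquad \kk_n(\hat Y_N)=\theta_1^n\kk_n\ (n\geq 3).
\end{align*}
Comparing with \Cref{thm:cumulants}, the third-and-higher identity forces $\theta_1=\EE[f'(g)]$, and the second then forces $\theta_2^2=\EE[f(g)^2]-\EE[f(g)]^2-\kk_2\EE[f'(g)]^2$, which is exactly the first formula for $\theta_2$.

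To derive the second, equivalent expression for $\theta_2$, I would apply Gaussian integration by parts (Stein's identity): for $g$ Gaussian of variance $\kk_2$ one has $\EE[g\cdot f(g)]=\kk_2\,\EE[f'(g)]$, and substituting $\kk_2\EE[f'(g)]^2=\tfrac{1}{\kk_2}\EE[g\cdot f(g)]^2$ into the first expression gives the second. The only subtle point, which I would address explicitly, is the well-posedness of $\theta_2$, namely that the quantity under the square root is non-negative. In the rewritten form this reduces to $\mathrm{Var}(f(g))\cdot \mathrm{Var}(g)\geq \mathrm{Cov}(g,f(g))^2$, which is the Cauchy--Schwarz inequality for $L^2$-centered random variables; equality (and hence $\theta_2=0$) occurs precisely when $f$ is affine on the support of $g$, consistent with the trivial case where the non-linearity has no effect. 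This is the mildly delicate step; everything else is a direct bookkeeping of free cumulants.
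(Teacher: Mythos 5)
Your proof is correct and takes essentially the same route as the paper: compute the free cumulants of $\hat Y_N$ via asymptotic freeness of $X_N$ and $Z_N$ together with homogeneity and additivity, observe they reproduce the $\kk_n^f$ from \Cref{thm:cumulants}, and then derive the second form of $\theta_2$ and its non-negativity via Stein's identity and Cauchy--Schwarz (remarks the paper places just before its proof rather than inside it).
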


In the reformulation of $\theta_2$ we have used Stein's identity for Gaussian integration. Note also that we have
$$\EE[g\cdot f(g)]=\EE[g\cdot (f(g)-\EE[f(g)])],$$ 
since $g$ is centered
and thus, by Cauchy-Schwartz, the quantity under the square root is always positive.

\begin{proof}
Since $Z_N$ is asymptotically free from $X_N$, we get in the large $N$-limit the free cumulants $\hat \kk_n$ of $\hat Y_N$ as the sum of the corresponding free cumulant of $\theta_1 X_N$ and $\theta_2 Z_N$. Since $Z_N$ has a standard semicircle distribution in the limit, i.e., only its second free cumulant is 1, all others vanish, this means
$$\hat \kk_n=\theta_1^n \kk_n+\theta_2^2 \delta_{n2}.$$
It is easy to check that this reproduces the $\kk_n^f$ from \Cref{thm:cumulants}.
\end{proof}

This means that the asymptotic eigenvalue distribution of $Y_N$ is given as the free convolution of a scaled distribution of $X_N$ and a semicircle. Note that in the case $\theta_1=0$ the original information about $Y_N$ dissapears and we just get a semicircle. This happens, for example, if $f$ is an even (and $f'$ thus an odd) function.

\section{Example for the ReLU function}
It is feasible that the statements of \Cref{thm:cumulants} and 
\Cref{cor:gaussian-equivalence} are also true for more general functions than polynomials. We leave the theoretical investigation of this for future investigations, here we only want to check this numerically for the practically relevant ReLU function, i.e.,
$$f(x)=\text{ReLU}(x):=\max(0,x).$$

Typical examples for rotationally invariant matrix ensembles are given by polynomials in several independent GOEs. We take here
\begin{equation}\label{eq:X_Neins}
X_N=A_N^2+A_N+B_NA_N+A_NB_N+B_N,
\end{equation}
where $A_N$ and $B_N$ are independent standard GOE. Figure 1(A) shows the histogram of the eigenvalues of one realization of $X_N$ for $N=5000$.

\begin{figure}\label{fig:1A}
    \centering
    \begin{subfigure}[t]{0.4\textwidth}
        \centering
        \includegraphics[width=5cm]{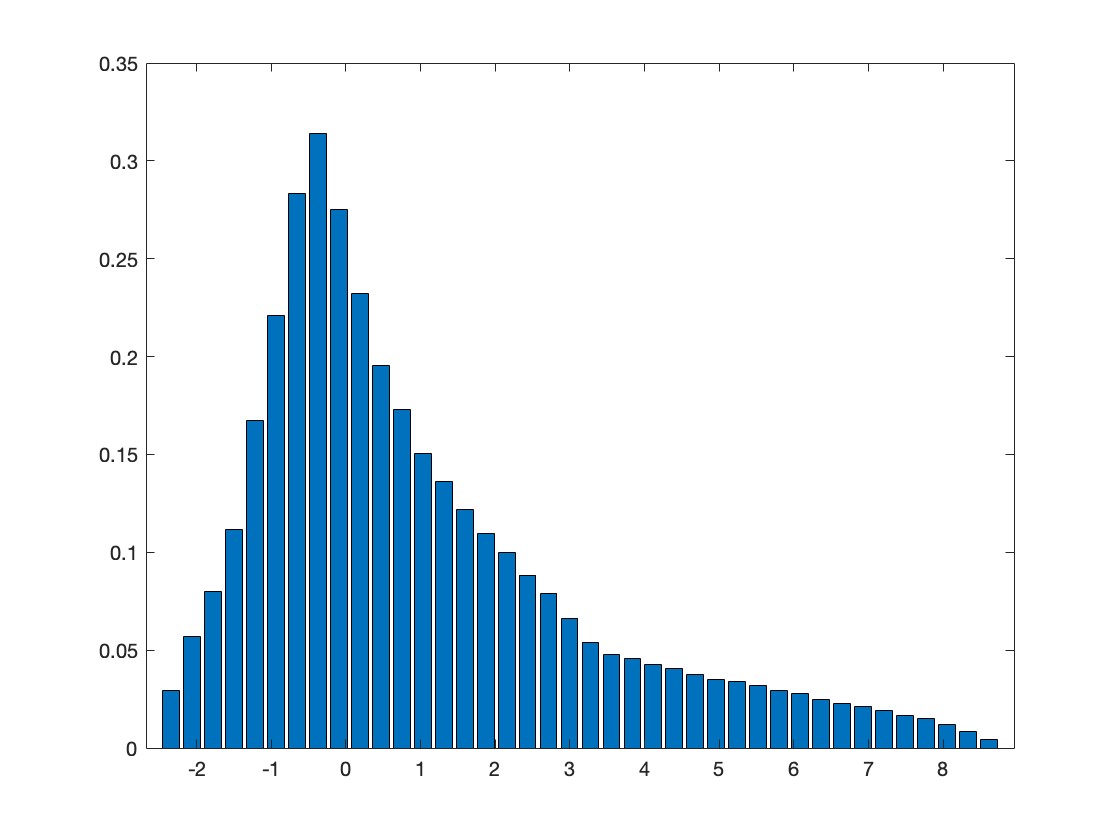}
        \caption{Eigenvalue distribution of $X_N$}
    \end{subfigure}%
    ~ 
    \begin{subfigure}[t]{0.4\textwidth}
        \centering
        \includegraphics[width=5cm]{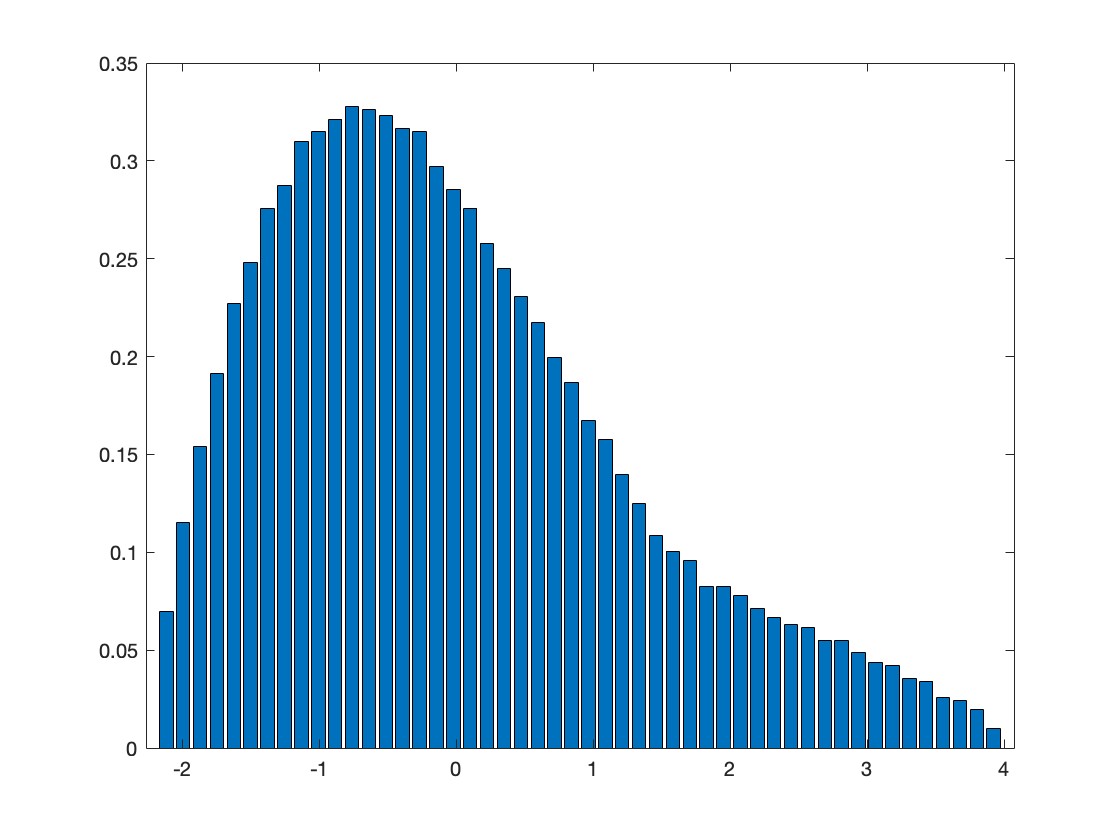}
        \caption{Eigenvalue distribution of $Y_N$, after applying ReLU entrywise to $X_N$}
    \end{subfigure}
    \begin{subfigure}{\textwidth}
         \includegraphics[width=12cm]{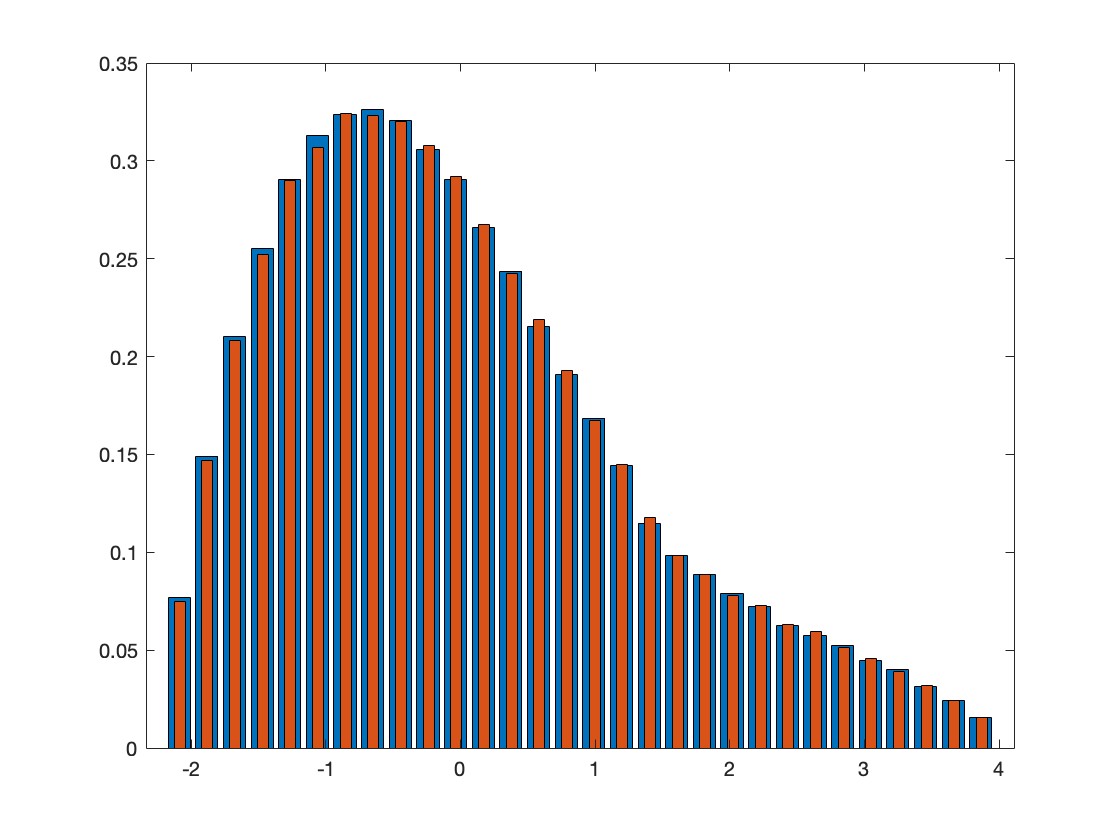}
    \caption{Superposition of the eigenvalues of the non-linear matrix $Y_N$ and of its Gaussian equivalent $\hat Y_N$}
    \end{subfigure}
    \caption{The effect of entrywise application of ReLU on $X_N$ from \Cref{eq:X_Neins} and comparison to the Gaussian equivalent $\hat Y_N$ from \Cref{eq:Y_Nhateins}; $N=5000$} 
\end{figure}

We apply now the ReLU function entrywise to $X_N$, resulting in the eigenvalue distribution for $Y_N$, as given in Figure 1(B).

In this example we have $\kk_2=5$, $\theta_1=1/2$, $\theta_2=\sqrt{5(1-2/\pi)}/2$, and Figure 1(C) superimposes the eigenvalue distribution of 
\begin{equation}\label{eq:Y_Nhateins}
    \hat Y_N:=\frac 12 X_N +\frac 12\sqrt{5(1-2/\pi)}Z_N
\end{equation}
to the preceding plot for $Y_N$. Both histograms agree perfectly.

\section{Extension to the multivariate case}
We can extend our investigations to functions of several independent matrices. Consider $l$ independent matrices $X_N^{(1)}=(x_{ij}^{(1)})_{i,j=1}^N, \dots , X_N^{(l)}=(x_{ij}^{(l)})_{i,j=1}^N$, where each of them is symmetric and orthogonally invariant. We build a new matrix 
$Y_N$ out of them by applying entrywise a function $f:\RR^l\to\RR$; that is, we
define $Y_N=(y_{ij})_{i,j=1}^N$ by
\begin{equation}\label{eq:Y-multivariate}
y_{ij}=\begin{cases}
\frac 1{\sqrt{N}} f(\sqrt{N} x^{(1)}_{ij}, \dots,\sqrt{N}x^{(l)}_{ij}), & i\not=j\\
0,& i=j.
\end{cases}
\end{equation}

By the same kind of arguments as before we get then the following description of the asymptotic free cumulants of $Y_N$.
\begin{theorem}\label{thm:cum-of-nl}
We have the following relation between the free cumulants $\kk^{(1)}_n, \dots, \kk^{(l)}_n$ of the asymptotic eigenvalue distributions of $X^{(1)}_N, \dots, X_N^{(l)}$, respectively, and the free cumulants $\kk_n^f$ of the asymptotic eigenvalue distribution of the random matrix $Y_N$:
\begin{enumerate}
\item
$\kk_1^f=0$;
\item
$\kk_2^f=c_2(f(g_1, \dots, g_l),f(g_1, \dots, g_l))=\EE[f(g_1, \dots, g_l)^2]-\EE[f(g_1, \dots, g_l)]^2$;
\item
for $n\geq 3$,
$$\kk_n^f= \sum_{j=1}^l \kk^{(j)}_n\cdot \EE[\partial_jf(g_1, \dots, g_l)]^n;$$
\end{enumerate}
where $g_1, \dots, g_l$ are independent Gaussian random variables with mean zero and variance $\kk^{(1)}_2, \dots, \kk^{(l)}_2$, respectively.
\end{theorem}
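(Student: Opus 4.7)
The plan is to mimic the proof of \Cref{thm:cumulants} while carrying an extra color label that records which of the $l$ matrices each entry comes from. First, by multilinearity of the ccumulants it suffices to prove the statement for multivariate monomials
$$f(x_1,\dots,x_l)=x_1^{m_1}\cdots x_l^{m_l},\qquad m:=m_1+\cdots+m_l.$$
For such an $f$ the entry $y_{ij}$ equals $N^{(m-1)/2}\prod_s (x_{ij}^{(s)})^{m_s}$, and I will expand $c_n(y_{i_1i_2},\dots,y_{i_ni_1})$ by the formula of Leonov and Shiryaev as a sum over partitions $\pi$ on the $nm$ expanded entries satisfying $\pi\vee\tau=1_{nm}$, where $\tau$ groups together the arguments belonging to the same $y_{i_ki_{k+1}}$.

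The genuinely new ingredient, compared with \Cref{thm:cumulants}, is that independence of $X_N^{(1)},\dots,X_N^{(l)}$ forces every contributing block of $\pi$ to be \emph{monochromatic}, i.e.\ to contain arguments of a single color only. With this restriction the order analysis proceeds color by color. Fix first $n\ge 3$ and indices $i_1,\dots,i_n$ pairwise distinct: as in \Cref{thm:cumulants}, the leading-order contribution requires exactly one full cyclic block in $\pi$, and by monochromaticity this cycle lies entirely in one color $j$, contributing a factor $\kk_n^{(j)}$. The remaining points in each group of $\tau$ must then be paired inside their own color; after the cycle has consumed one copy of $x^{(j)}$, the $k$-th group contains $m_j-1$ color-$j$ and $m_s$ color-$s$ arguments ($s\ne j$) to be paired. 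A direct count, together with independence of the Gaussians $g_1,\dots,g_l$, shows that the per-group weight is
$$m_j\,\EE[g_j^{m_j-1}]\prod_{s\ne j}\EE[g_s^{m_s}]=\EE[\partial_j f(g_1,\dots,g_l)],$$
where the parity constraints are automatically built in because odd moments of centered Gaussians vanish. Raising this to the $n$-th power for the $n$ groups of $\tau$, summing over the color $j$ of the cycle, and extending by multilinearity to general polynomial $f$ yields the claimed identity
$$\kk_n^f=\sum_{j=1}^l \kk_n^{(j)}\,\EE[\partial_j f(g_1,\dots,g_l)]^n.$$
The $n=2$ case is analogous but without a cycle obligation: all blocks are monochromatic pairs, giving $\kk_2^f=\EE[f(g_1,\dots,g_l)^2]-\EE[f(g_1,\dots,g_l)]^2$ after subtracting the pairings that fail $\pi\vee\tau=1$. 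The $n=1$ case is immediate from the centeredness assumption and the zero diagonal.

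What remains is to verify that indices with subcycles or with several (possibly edge-sharing) disjoint cycles do not contribute to the macroscopic traces, and that the quantities $\kk_n^f$ above really coincide with the free cumulants of the asymptotic eigenvalue distribution of $Y_N$. For this I can copy the graph-theoretic, cactus-graph argument from the proof of \Cref{thm:cumulants} almost verbatim: the only asymptotic inputs used there are the orders of classical cumulants of entries of a single orthogonally invariant ensemble (\Cref{lemma:orthcumulants}), which hold for each color separately, together with the observation that $\pi\vee\tau=1$ plus monochromaticity leaves only cactus-graph contributions surviving in $\EE[\tr(Y_N^n)]$. The main obstacle I anticipate is not a new analytic estimate but the combinatorial bookkeeping of colors: one has to check that no exotic color pattern in $\pi$ can achieve a better order than the monochromatic cycle-plus-pairings scenario above, and in particular that cross-color connections between disjoint cycles are strictly subleading. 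Once this is in hand, the free moment--cumulant formula for $Y_N$ follows exactly as in \Cref{thm:cumulants}.
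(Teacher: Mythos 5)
Your proposal follows essentially the same route as the paper's proof: reduce to multivariate monomials by multilinearity, expand via Leonov--Shiryaev, observe that independence forces every contributing block of $\pi$ to be monochromatic, extract one full $n$-cycle in a single color and pairings per group for the leading order of $\kappa_n^f$ ($n\ge 3$), handle $n=2$ by pairings, and defer the subcycle/disconnected-cycle/cactus-graph analysis to the proof of the univariate theorem. The only minor imprecision is your phrase ``raising this to the $n$-th power,'' which presupposes the same monomial in each slot; strictly one first computes $\kappa_n^{f_1,\dots,f_n}=\sum_j\kappa_n^{(j)}\prod_k\EE[\partial_j f_k(g_1,\dots,g_l)]$ for possibly different monomials $f_1,\dots,f_n$ (as the paper does) and only then sums over the monomial decomposition of a general polynomial $f$ to obtain $\kappa_n^f=\sum_j\kappa_n^{(j)}\EE[\partial_j f]^n$.
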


This yields the following multivariate version of a Gaussian equivalence principle, which we only state for the case $l=2$.

\begin{corollary}
Let $X^{(1)}_N$, $X^{(2)}_N$ and $Y_N$ be as above. Then the non-linear random matrix model $Y_N$ has the same asymptotic eigenvalue distribution as the linear model
$$\hat Y_N:=\theta_1 X^{(1)}_N+\theta_2 X^{(2)}_N+\theta Z_N,$$
where $Z_N$ is a symmetric standard \text{\rm GOE} random matrix which is independent from $X^{(1)}_N$ and $X_N^{(2)}$ and where
$$\theta_1:=\EE[\partial_1 f(g_1,g_2)],\qquad
\theta_2:=\EE[\partial_2 f(g_1,g_2)]$$
and
\begin{align*}
\theta:&= \sqrt{c_2(f(g_1,g_2),f(g_1,g_2))-\kk_2^{(1)}\theta_1^2-\kk_2^{(2)}\theta_2^2 },
\end{align*}
where $g_1$ and $g_2$ are independent Gaussian random variables with mean zero and variance $\kk^{(1)}_2$ and $\kk_2^{(2)}$, respectively. 
\end{corollary}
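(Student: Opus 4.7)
The plan is to mimic the one-variable proof of \Cref{cor:gaussian-equivalence}, reading off the free cumulants of $\hat Y_N$ from asymptotic freeness and matching them termwise to the $\kk_n^f$ supplied by \Cref{thm:cum-of-nl}. Since $X_N^{(1)}$, $X_N^{(2)}$ and $Z_N$ are independent and each is orthogonally invariant (with $Z_N$ in particular being GOE), standard results on asymptotic freeness of independent orthogonally invariant ensembles give that the triple $(X_N^{(1)}, X_N^{(2)}, Z_N)$ is asymptotically free. Consequently the free cumulants of the sum $\hat Y_N = \theta_1 X_N^{(1)} + \theta_2 X_N^{(2)} + \theta Z_N$ decompose additively, and because $Z_N$ has a standard semicircle limit, its only non-vanishing free cumulant is the second, equal to $1$. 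Thus
\begin{equation*}
\hat\kk_n \;=\; \theta_1^n \kk_n^{(1)} + \theta_2^n \kk_n^{(2)} + \theta^2\,\delta_{n,2}.
\end{equation*}

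Next I check that $\hat\kk_n$ agrees with $\kk_n^f$ from \Cref{thm:cum-of-nl}, for each $n$. For $n=1$ both sides vanish ($\hat\kk_1=0$ because $X_N^{(i)}$ and $Z_N$ are centred in the limit by \Cref{lemma:orthcumulants}, and $\kk_1^f=0$ by the theorem). For $n\ge 3$, the choice $\theta_i = \EE[\partial_i f(g_1,g_2)]$ gives
\begin{equation*}
\hat\kk_n \;=\; \EE[\partial_1 f(g_1,g_2)]^n\, \kk_n^{(1)} + \EE[\partial_2 f(g_1,g_2)]^n\, \kk_n^{(2)} \;=\; \kk_n^f.
\end{equation*}
For $n=2$, solving the defining equation $\hat\kk_2 = c_2(f,f) = \kk_2^f$ for $\theta^2$ yields precisely
\begin{equation*}
\theta^2 \;=\; c_2\bigl(f(g_1,g_2),f(g_1,g_2)\bigr) - \kk_2^{(1)}\theta_1^2 - \kk_2^{(2)}\theta_2^2,
\end{equation*}
which is the value stated in the corollary.

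Two small obligations remain. First, one must verify that this $\theta^2$ is non-negative so that $\theta$ is real; I would argue this via a bivariate Stein/Gaussian integration by parts, rewriting $\kk_2^{(i)}\theta_i = \EE[g_i\, f(g_1,g_2)]$ and then applying Cauchy–Schwarz on $\RR^2$ against the centred random variable $f(g_1,g_2)-\EE[f(g_1,g_2)]$, exactly as in the univariate reformulation noted after \Cref{cor:gaussian-equivalence}. Second, since the limiting eigenvalue distribution of a matrix ensemble with a limit distribution of all orders is uniquely determined by its sequence of free cumulants, the equality $\hat\kk_n = \kk_n^f$ for all $n$ forces $\hat Y_N$ and $Y_N$ to share the same asymptotic eigenvalue distribution, completing the proof.

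The only non-routine point is the asymptotic freeness input for the triple $(X_N^{(1)}, X_N^{(2)}, Z_N)$; everything else is bookkeeping. This would be the main obstacle if one wished to state the result without a background assumption such as higher-order freeness or the existence of a limit distribution of all orders, but given the standing hypotheses of the paper it is immediate.
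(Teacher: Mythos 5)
Your proof is correct and takes essentially the same route the paper uses: the paper does not spell out this corollary's proof explicitly (the displayed proof following it is really the proof of \Cref{thm:cum-of-nl}), but the intended argument is exactly the one you give, mirroring \Cref{cor:gaussian-equivalence} — asymptotic freeness of the independent orthogonally invariant triple $(X_N^{(1)}, X_N^{(2)}, Z_N)$, additivity of free cumulants, $\hat\kk_n = \theta_1^n\kk_n^{(1)} + \theta_2^n\kk_n^{(2)} + \theta^2\delta_{n,2}$, and termwise matching against the $\kk_n^f$ of \Cref{thm:cum-of-nl}, with the nonnegativity of $\theta^2$ secured by the bivariate Stein identity and Cauchy--Schwarz.
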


\begin{proof}
    Let us assume that our function $f$ is a (multivariate) polynomial. As we detailed before, by multilinearity, it suffices to consider monomial nonlinearities. To keep the following in a readable style, we will denote the $(i,j)$-entry of $X_N^{(k)}$ by $(x_{k})_{ij} $ .

    So, let monomials 
    \begin{multline*}
    f_1(x_1, \dots, x_l) = (x_1)^{m_1^{(1)}} (x_2)^{m_1^{(2)}} \cdots(x_l)^{m_1^{(l)}} , \dots, \\
    \dots
    f_n(x_1, \dots, x_l) = (x_1)^{m_n^{(1)}} (x_2)^{m_n^{(2)}} \cdots(x_l)^{m_n^{(l)}} 
    \end{multline*}
    with non-zero degree in each variable be given. If one degree is zero, then we can just replace $l$ by $l-1$. Assume again that we are given $n$ distinct indices $i_1, \dots, i_n$ and we want to know what $c_n(y_{i_1i_2}, y_{i_2i_3}, \dots, y_{i_ni_1} )$ looks like in leading order. Plugging in the definition of $Y_N$ we arrive at:
    \begin{multline*}
        c_n \left[ y_{i_1i_2}, y_{i_2i_3}, \dots, y_{i_ni_1} \right] \\= N^{(m - n)/2 }c_n\left[ (x_1)_{i_1i_2}^{m_1^{(1)}} (x_2)_{i_1i_2}^{m_1^{(2)}} \cdots (x_l)_{i_1i_2}^{m_1^{(l)}}, \dots,(x_1)_{i_ni_1}^{m_n^{(1)}}(x_2)_{i_ni_1}^{m_n^{(2)}} \cdots (x_l)_{i_ni_1}^{m_n^{(l)}}\right] 
    \end{multline*}

     Using the formula of Leonov and Shiryaev for $c_n$:
     \begin{align*}
         &c_n\left[ (x_1)_{i_1i_2}^{m_1^{(1)}}  (x_2)_{i_1i_2}^{m_1^{(2)}} \dots, \dots, \dots (x_{l-1})_{i_ni_1}^{m_n^{(l-1)}} (x_l)_{i_ni_1}^{m_n^{(l)}}\right] \\
        &=  \sum_{\pi \in \mathcal{P}(m), \pi \vee \sigma = 1_{m}} c_\pi \left[ (x_1)_{i_1i_2}, \dots, (x_1)_{i_1i_2}, (x_2)_{i_1i_2}, \dots, (x_2)_{i_1i_2}, \dots \right].
     \end{align*}

     The partition $\pi$ in the above sum must not, by independence, join any $(x_k)_{i_j i_{j+1}}$ with any of the $(x_h)_{i_r i_{r+1}}$ for $k \neq h$. Let us assume that $n \geq 3$. Since we are assuming that the indices $(i_1, i_2, \dots, i_n)$ form a full cycle, $\pi$ can only contain full cycles on each of the $X_N^{(i)}$ entries or connect them within one block. Let us now use our arguments from the proof of \Cref{thm:cumulants} in the multivariate setting. Let $s^{(k)}_j$ denote the number of blocks of $\pi$ of size $j$ which connect the $k$-variables withing one group; they will give a contribution of order $s^{(k)}_j(1 - j)$ and as such we can again argue that the total contribution  will be of order $ \sum_{j=1}^n s_j(1 - j )  = \sum_{j=1}^n s_j - m$, where $s_j = \sum_{k=1}^l s^{(k)}_j$. As in the single-matrix case we see that we need to maximize the number of blocks in order to obtain a leading order contribution, which is again $\mathcal{O}(N^{1-n})$. Now, since we need at least one full cycle on one of the $(k)$-indices the leading order contribution will be exactly the one with one full cycle on one of the entries of a matrix $X_N^{(k)}$ and the rest are pairings within each group. Note, however, that now we have the $(k)$-index as additional degree of freedom, so that several such partitions will make a leading order contribution. Namely, we can choose the index on which we will have a full cycle and then pair the remaining indices. We will thus have in total:
     \begin{align*}
         \kk_n^{f_1, \dots, f_n} &= \sum_{k} \kk_n^{(k)} m_1^{(k)} \kk_2^{(k)} \mathcal{P}_2(m_1^{(k)} -1) \cdot  m_2^{(k)} \kk_2^{(k)}\mathcal{P}_2(m_2^{(k)} -1) \cdots \\ &\qquad\qquad\qquad\qquad\cdots m_n^{(k)} \kk_2^{(k)}\mathcal{P}_2(m_n^{(k)} -1)  \times \prod_{j \neq k} \mathcal{P}_2(m_1^{(j)}) \cdots \mathcal{P}_2(m_n^{(j)}) \\
         &= \sum_{k} \kk_n^{(k)} m_1^{(k)}\kk_2^{(k)}\mathbb{E}[g_k^{m_1^{(k)} - 1}] \cdots m_n^{(k)}\kk_2^{(k)}\mathbb{E}[g_k^{m_n^{(k)} - 1}]  \prod_{j \neq k} \mathbb{E}[g_j^{m_1^{(k)}}] \cdots \mathbb{E}[g_j^{m_n^{(j)}}] \\
         &=  \sum_{k} \kk_n^{(k)} \prod_{j=1}^n \partial_k \mathbb{E}[f_j(g_1, \dots, g_n)],
     \end{align*}
     where the $g_k, k = 1, \dots, n$, are independent Gaussians with $g_k \sim \mathcal{N}(0, \kk_2^{(k)} )$. By multilinearity, we have the same identity for any multivariate polynomial.

     In the case $n = 2$ we have as in the univariate case that any pairing will contribute as long as at least one block connects the two different groups. Thus in the limit $N \rightarrow \infty$ we will obtain for multivariate monomials $f_1(x_1, \dots , x_l) = x_1^{m_1} \cdots x_l^{m_l}$ and $f_2(x) = x_1^{m'_1} \cdots x_l^{m'_l}$ that
     \begin{align*}
      \kk_2^{f_1, f_2} &= \prod_j (\kk_2^{(j)})^{(m_j + m_j')/2} \mathcal{P}_2( m_j +m_j') - \prod_j (\kk_2^{(j)})^{(m_j + m_j')/2} \mathcal{P}_2(m_j)  \times \mathcal{P}_2(m_j')\\
      &= \mathbb{E}[\prod_j g_j^{m_j+ m_j'} ] - \mathbb{E}[\prod_j g_j^{m_j}] \cdot \mathbb{E}[\prod_j g_j^{m_j'}] \\
      &= c_2(f_1(g_1,\dots,g_l), f_2(g_1,\dots,g_l)),
     \end{align*}
     where the $g_k, k= 1, \dots , l$, are independent Gaussians of variance $\kk_2^{(k)}$, respectively.
     
     In case that there are subcycles present we have a similiar situation as in the one matrix case since, by independence, the cumulants factorize into those of the cumulants of the individual matrices and we can transfer our arguments from the proof of \Cref{thm:cumulants}. If the index contains $d$ different cycles of length $d_j$ each, then, assuming without restriction that the degrees are large enough, we need each of these cycles once and now have the freedom to choose on which $X^{(k)}_N $ we choose the cycle; the important difference is that now only elements corresponding to the same matrix may be connected. Finally, we can connect the blocks that correspond to the same edge via pairings. Thus, again the order of a cumulant in this case can be at most  $N^{d - \sum_j d_j/2 - n/2}$; and contributions containing several disjoint cycles will also be subleading by the same argument of splitting blocks.

     Turning now to the computation of moments, note that in the present multivariate setting \Cref{lemma:cyclicycumulants} still holds and we can again decompose:
     \begin{align*}
        \sum_{i_1,\dots,i_n=1}^N c_\pi \left[y_{i_1i_2},y_{i_2i_3}, \dots, y_{i_ni_1} \right] &= \sum_{G} \sum_{\substack{i_1,\dots,i_n=1\\ G = G_i}}^N c_\pi \left[y_{i_1i_2},y_{i_2i_3}, \dots, y_{i_ni_1} \right].
    \end{align*}
    By the very same argument as in the univariate case the contributions of crossing partitions $\pi$ will vanish in the limit $N \rightarrow \infty$, since as we remarked, the contributions with several disjoint cycles being connected as well as those with subcycles have the same order as in the univariate case. And only contributions where $G$ is a cactus graph will survive, whose associated contribution will be $\kk_{\pi}^f$ for $\pi$ a noncrossing partition.
\end{proof}

\section{Example of the maximum function}

Again we expect that the multivariate results extend also beyond the case of polynomial functions. As an example, which might also be of some relevance in applications, let us check this numerically for the maximum function
$$f(x_1,x_2)=\max(x_1,x_2)=\frac 12 (x_1+x_2+\vert x_1-x_2\vert).$$
If we restrict to the symmetric situation $\kk_2^{(1)}=\kk_2^{(2)}=1$, then all our parameters are easy to calculate, namely
$$\EE[\max(g_1,g_2)]=\frac 1{\sqrt{\pi}},\qquad
\EE[\max(g_1,g_2)^2]=1$$
and thus
$$c_2(\max((g_1,g_2),\max(g_1,g_2))=1-\frac 1{\pi}$$
and furthermore
$$\theta_1=\theta_2=\text{prob}\{g_1>g_2\}=\frac 12;$$
thus
$$\theta=\sqrt{\frac 12 -\frac 1\pi}.$$
As our orthogonally invariant matrices we take
\begin{equation}\label{eq:XN1undX_2}
X_N^{(1)}=\frac{-A_N^2+B_N}{\sqrt{2}},\qquad
X_N^{(2)}=\frac{C_N^4+C_ND_N+D_NC_N}{\sqrt{12}},
\end{equation}
where $A_N,B_N,C_N,D_N$ are independent standard GOE. Figures 2(A) and 2(B) show their eigenvalue distributions, for one realization with $N=5000$.

\begin{figure}\label{fig:2}
    \centering
    \begin{subfigure}[t]{0.4\textwidth}
        \centering
        \includegraphics[width=5cm]{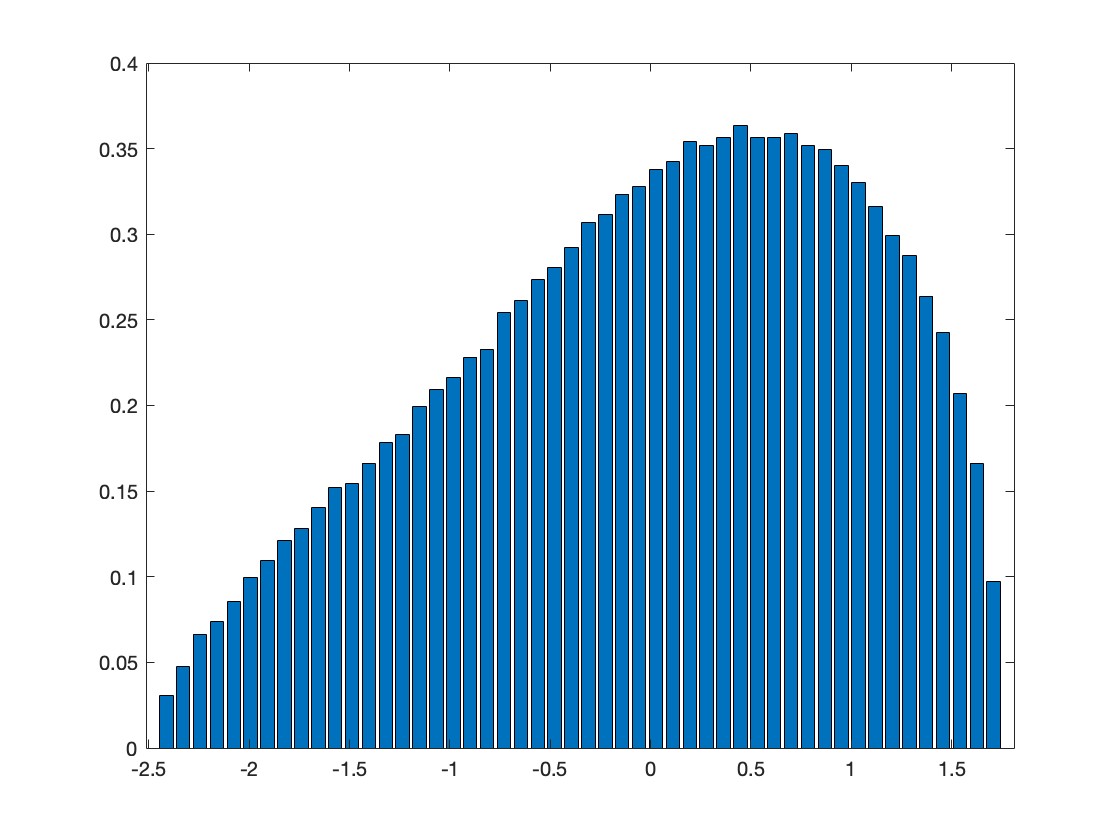}
        \caption{Eigenvalue distribution of $X_N^{(1)}$}
    \end{subfigure}%
    ~ 
    \begin{subfigure}[t]{0.4\textwidth}
        \centering
        \includegraphics[width=5cm]{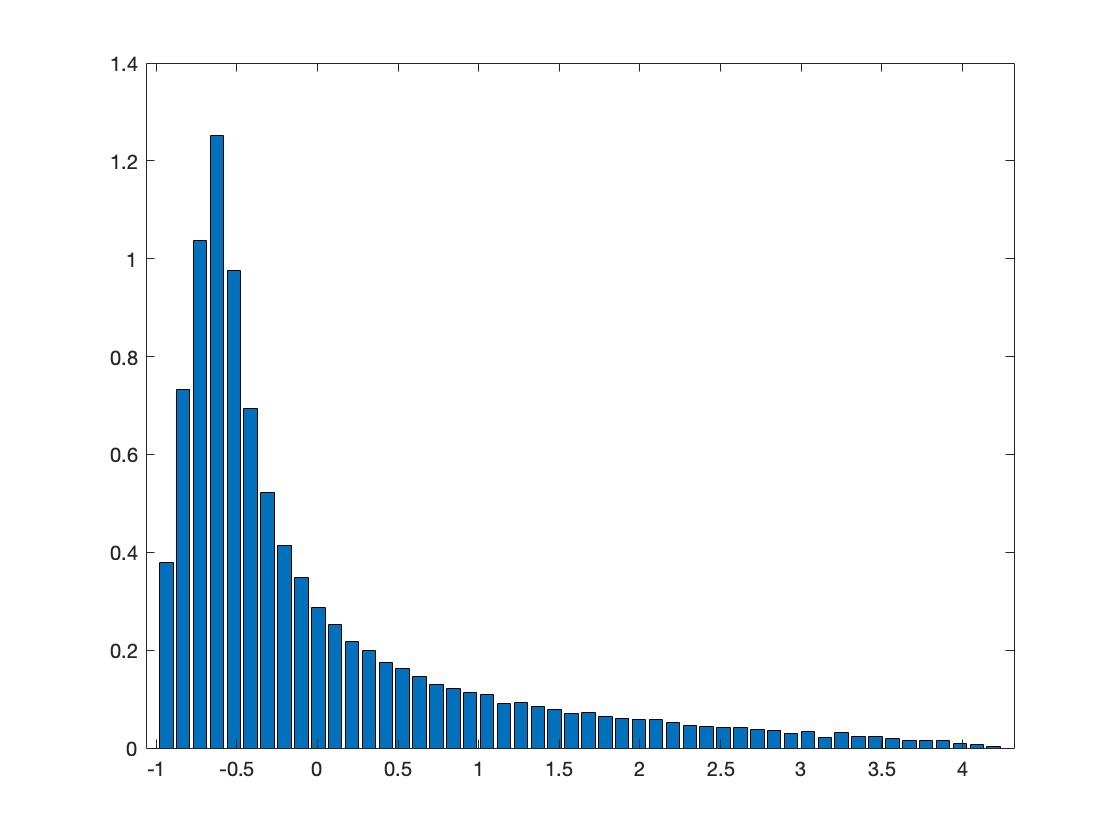}
        \caption{Eigenvalue distribution of $X_N^{(2)}$}
    \end{subfigure}
    \begin{subfigure}{\textwidth}
    \centering
         \includegraphics[width=8cm]{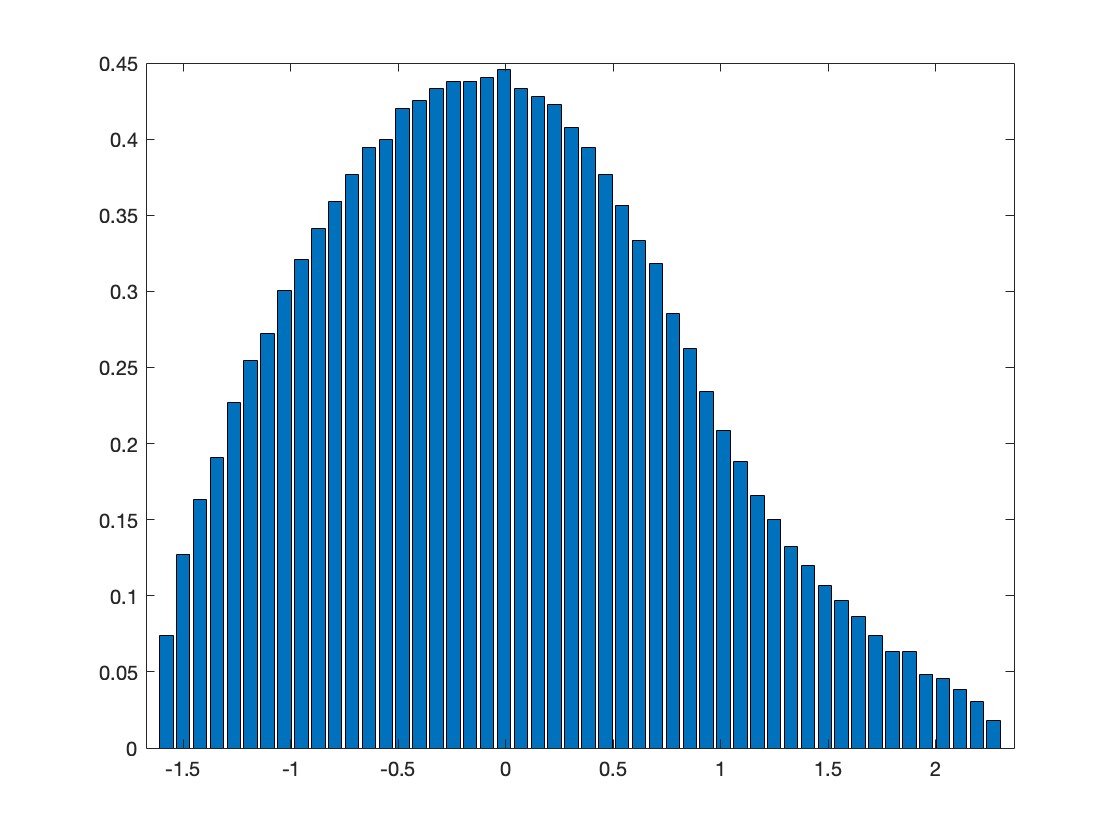}
    \caption{Eigenvalue distribution of $Y_N$, after applying max entrywise to $X_N^{(1)}$ and $X_N^{(2)}$}
    \end{subfigure}
    \caption{The effect of entrywise application of max on $X_N^{(1)}$ and $X_N^{(2)}$ from \Cref{eq:XN1undX_2}; $N=5000$} 
\end{figure}

We apply now the max-function entrywise to $X_N^{(1)}$ and $X_N^{(2)}$, which results in the eigenvalue distribution in Figure 2(C). (We should remark that there is one additional eigenvalue of order 40, which is outside the shown limits for the $x$-axis. This is of course irrelevant for the asymptotic eigenvalue distribution, but it will become important to deal with this when one considers questions of strong convergence and outliers. This eigenvalue comes from the non-vanishing mean of the entries of the matrix $Y_N$.)

\begin{figure}\label{fig:3}
    \centering
\includegraphics[width=12cm]{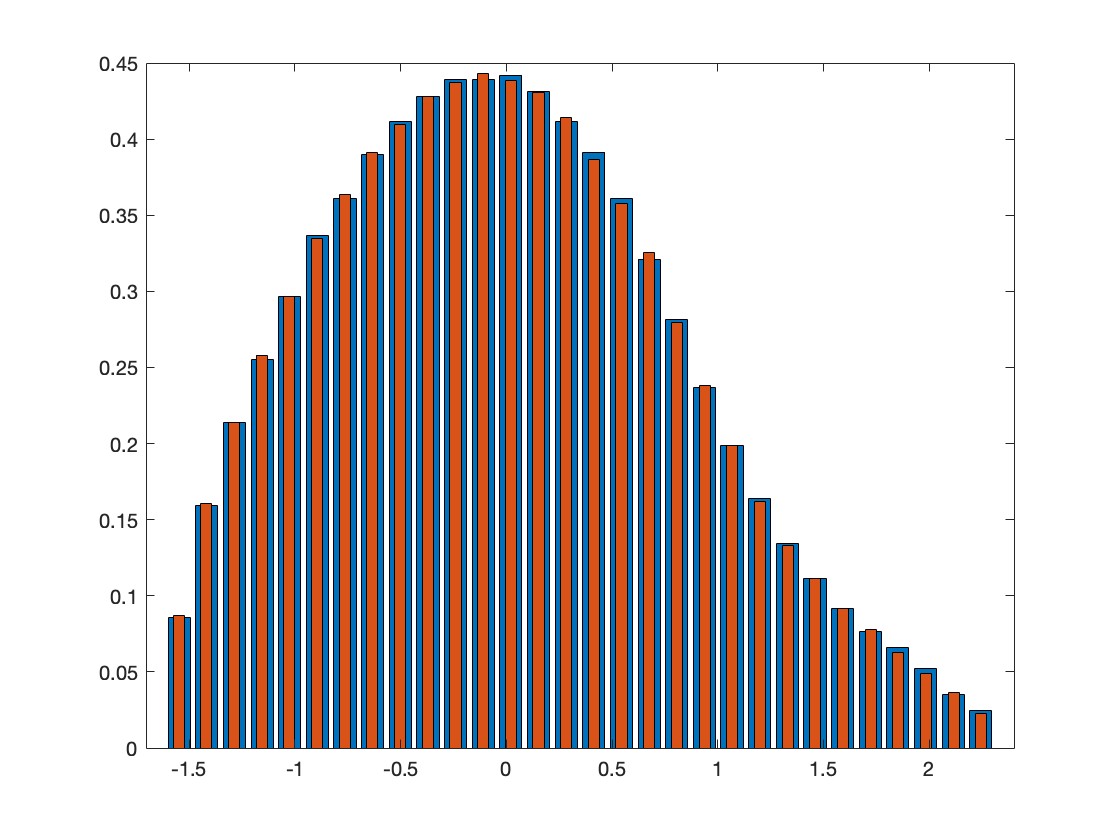}
\caption{
Superposition of the eigenvalue distributions of the non-linear matrix $Y_N$ and of its Gaussian equivalent $\hat Y_N$ from \Cref{eq:YNhatzwei}; $N=5000$}
\end{figure}

In Figure 3 we superimpose to this distribution the eigenvalue distribution of
\begin{equation}\label{eq:YNhatzwei}
\hat Y_N:=\frac 12 X_N^{(1)}+\frac 12 X_N^{(2)}+ \sqrt{\frac 12 -\frac 1\pi}Z_N,
\end{equation}
where $Z_N$ is a standard GOE, independent from $A_N,B_N,C_N,D_N$. Again, perfect agreement!

\section{Extension to the multivariate case with correlation}

We can extend our investigations also to a situation where the matrices $X^{(1)}_N$, \dots, $X^{(l)}_N$ are not independent, but have a correlation. For this we do not just assume the matrices to be separately orthogonally invariant, but we require the orthogonal invariance for the $l$-tuple $(X^{(1)}_N, \dots, X^{(l)}_N)$. This implies then that in addition to the individual free cumulants $\kk_n^{(1)}, \dots, \kk_n^{(l)}$ of $X^{(1)}_N, \dots, X^{(l)}_N$ we have also mixed free cumulants in the $l$ matrices, 
\begin{equation*}
\kappa_n^{(r_1,\dots,r_n)}:=\lim_{N\to\infty} N^{n-1}c_n(x_{i_1i_2}^{(r_1)},x_{i_2i_3}^{(r_2)},\dots,x_{i_ni_1}^{(r_n)}).
\end{equation*}
Of those mixed free cumulants only the second ones will be relevant for the effect of the non-linearities. 

\begin{theorem}\label{thm:cum-of-nl-corr}
We have the following relation between the free cumulants $\kk^{(r_1,\dots,r_n)}$ of the asymptotic eigenvalue distributions of a jointly orthogonally invariant family $X^{(1)}_N, \dots, X_N^{(l)}$, and the free cumulants $\kk_n^f$ of the asymptotic eigenvalue distribution of the non-linear random matrix $Y_N$ which is defined by \Cref{eq:Y-multivariate}.
\begin{enumerate}
\item
$\kk_1^f=0$;
\item
$\kk_2^f=c_2(f(g_1, \dots, g_l),f(g_1, \dots, g_l))=\EE[f(g_1, \dots, g_l)^2]-\EE[f(g_1, \dots, g_l)]^2$;
\item
for $n\geq 3$,
$$\kk_n^f
= \sum_{r_1, \dots , r_n = 1}^l \kappa_n^{({r_1}, \dots ,{r_n})} \prod_j^l  \mathbb{E} \left[ \partial_{r_j} f_j(g_1,\dots,g_l)\right];$$
\end{enumerate}
where $g_1, \dots, g_l$ are a Gaussian family of random variables with mean zero and the same covariance structure as the asymptotic covariance of $X^{(1)}_N, \dots, X^{(l)}_N$, that is,
$$c_2(g_r,g_s)=\kk_2^{(r,s)},\qquad \text{for all $1\leq r,s\leq l$.}$$
\end{theorem}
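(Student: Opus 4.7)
The plan is to follow the same strategy as for the proof of \Cref{thm:cum-of-nl}, with the crucial modification that the Leonov-Shiryaev expansion now involves \emph{mixed} cumulants of the jointly orthogonally invariant family. By multilinearity of classical cumulants, it suffices to treat the case where $f_1,\dots,f_n$ are monomials of the form $f_j(x_1,\dots,x_l) = x_1^{m_j^{(1)}} \cdots x_l^{m_j^{(l)}}$. For a cyclic index $(i_1,\dots,i_n)$ with distinct entries, one expands
$$c_n\bigl[f_1((x_1)_{i_1i_2},\dots,(x_l)_{i_1i_2}),\dots,f_n((x_1)_{i_ni_1},\dots,(x_l)_{i_ni_1})\bigr]$$
via the Leonov-Shiryaev formula, with outer partition $\tau$ grouping the $m_j := m_j^{(1)} + \cdots + m_j^{(l)}$ factors coming from each $f_j$.

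The key auxiliary input is that \Cref{lemma:orthcumulants} extends to a jointly orthogonally invariant family: the mixed cumulants $c_n((x^{(r_1)})_{i_1j_1},\dots,(x^{(r_n)})_{i_nj_n})$ vanish in leading order unless the index pairs form cycles, and in the single-cycle case the limit is the mixed free cumulant $\kk_n^{(r_1,\dots,r_n)}$ defined in the statement. This follows from the identical $F_j$-conjugation argument used for a single matrix, since joint orthogonal invariance is explicitly hypothesized. With this in hand, the order-counting from the proof of \Cref{thm:cumulants} carries over verbatim: the leading-order contribution $\mathcal{O}(N^{1-n})$ comes from those partitions $\pi$ in the Leonov-Shiryaev sum containing exactly one block of length $n$ that forms a full cycle across the $n$ groups of $\tau$, with the remaining elements paired off inside each group.

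Two new features appear in the combinatorial count. First, the full cycle can now carry any matrix label $r_j \in \{1,\dots,l\}$ at position $j$, contributing $\kk_n^{(r_1,\dots,r_n)}$. Second, within group $j$ of $\tau$ the remaining elements (after removing the cycle representative of label $r_j$, leaving $m_j^{(r_j)}-1$ of type $r_j$ and $m_j^{(k)}$ of each type $k\neq r_j$) must be paired, and pairs $(x^{(r)},x^{(s)})$ are now allowed across labels, each contributing $\kk_2^{(r,s)}$ in the limit. The weighted sum over such pairings is precisely, by Wick's theorem, the Gaussian moment $\EE\bigl[\prod_k g_k^{m_j^{(k)} - \delta_{k,r_j}}\bigr]$ for the Gaussian family with covariance $(\kk_2^{(r,s)})_{r,s}$; multiplying by the combinatorial factor $m_j^{(r_j)}$ (choice of which element of type $r_j$ enters the cycle) gives exactly $\EE[\partial_{r_j} f_j(g_1,\dots,g_l)]$. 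Summing over all tuples $(r_1,\dots,r_n)$ produces the asserted formula for $\kk_n^f$, and extending by multilinearity gives the statement for general polynomial $f$. The case $n=2$ is handled in the same way, with the observation that no full cycle is required and the same Wick bookkeeping reproduces $c_2(f(g_1,\dots,g_l),f(g_1,\dots,g_l))$.

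Finally, the passage from cumulants to moments, $\EE[\tr(Y_N^n)] \to \sum_{\pi\in NC(n)} \kk_\pi^f$, is inherited almost verbatim from the univariate proof: contributions from crossing partitions, from repeated edges, and from partitions connecting disjoint cycles all remain subleading, because the bound $N^{d-\sum_j d_j/2 - n/2}$ on individual cumulant contributions together with the counting of compatible indices depends only on the cycle structure of the multi-index and not on the matrix labels $r_j$. The main obstacle I anticipate is verifying carefully that the extension of \Cref{lemma:orthcumulants} to mixed cumulants of jointly invariant families yields the same sharp order bounds, and in particular that allowing blocks of $\pi$ to span several matrix labels does not create new leading-order contributions beyond those already counted; this should follow from the Non-Eulerian vanishing via the $F_j$-conjugation trick applied directly to the tuple $(X_N^{(1)},\dots,X_N^{(l)})$, at the cost of a slightly more bookkeeping-heavy adaptation of the appendix argument.
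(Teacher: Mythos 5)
Your proposal is correct and follows essentially the same route as the paper's proof: Leonov–Shiryaev expansion with mixed cumulants, order-counting that isolates one full cycle plus pairings as the leading $\mathcal{O}(N^{1-n})$ contribution, the freedom of the cycle to pick any matrix label $r_j$ at each position (producing $\kk_n^{(r_1,\dots,r_n)}$), cross-label pairings reproducing the Gaussian moments via Wick, the factor $m_j^{(r_j)}$ yielding $\EE[\partial_{r_j}f_j]$, and the $n=2$ case and the passage from cumulants to moments handled exactly as in the uncorrelated multivariate case. The paper packages the Wick bookkeeping slightly differently — it introduces an auxiliary Gaussian family $h_j^{(r)}$ that is independent across positions $j$ but carries the cross-matrix covariance $\kk_2^{(r,s)}$ for fixed $j$, so that the leading-order sum is identified with a literal classical cumulant of products involving these $h$'s before being factored into the product $\prod_j\EE[\partial_{r_j}f_j(g_1,\dots,g_l)]$ — but this is cosmetically different, not substantively different, from your direct count of pairings and identification of the resulting weights with Gaussian moments. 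Your explicit flagging of the need to extend \Cref{lemma:orthcumulants} to jointly orthogonally invariant tuples (and the remark that the $F_j$-conjugation argument carries over unchanged) is a point the paper leaves implicit; it is a reasonable thing to make explicit, and you correctly note it as the only place where the adaptation requires any real care.
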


Again, this yields a corresponding Gaussian equivalence principle. As before, we only state it for $l=2$.

\begin{corollary}\label{thm:correlatedmatrices}
Let $X^{(1)}_N$ and $X^{(2)}_N$ be jointly orthogonally invariant random matrices such that a joint limit distribution of all orders exist. Let $Y_N$ be defined as in \Cref{eq:Y-multivariate}. Then the non-linear random matrix model $Y_N$ has the same asymptotic eigenvalue distribution as the linear model
$$\hat Y_N:=\theta_1 X^{(1)}_N+\theta_2 X^{(2)}_N+\theta Z_N,$$
where $Z_N$ is a symmetric standard \text{\rm GOE} random matrix which is independent from $X^{(1)}_N$ and $X_N^{(2)}$ and where
$$\theta_1:=\EE[\partial_1 f(g_1,g_2)],\qquad
\theta_2:=\EE[\partial_2 f(g_1,g_2)]$$
and
\begin{align*}
\theta:&= \sqrt{c_2(f(g_1,g_2),f(g_1,g_2))-\kk_2^{(1,1)}\theta_1^2-\kk_2^{(2,2)}\theta_2^2 -2\kk_2^{(1,2)}\theta_1\theta_2},
\end{align*}
where $g_1$ and $g_2$ are a Gaussian family of random variables with mean zero and the same covariance structure as the asymptotic covariance of $X^{(1)}_N$ and $X^{(2)}_N$, that is,
$$c_2(g_1,g_1)=\kk_2^{(1,1)},\qquad
c_2(g_2,g_2)=\kk_2^{(2,2)},\qquad
c_2(g_1,g_2)=\kk_2^{(1,2)}.
$$
\end{corollary}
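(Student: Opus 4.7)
The plan is to deduce this corollary from \Cref{thm:cum-of-nl-corr} by computing the asymptotic free cumulants of the linear model $\hat Y_N$ and matching them with the $\kk_n^f$ provided by that theorem. Since two random matrices with the same asymptotic free cumulants have the same asymptotic eigenvalue distribution, this will finish the proof.

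First, I would use that the GOE matrix $Z_N$ is asymptotically free from the jointly orthogonally invariant pair $(X_N^{(1)}, X_N^{(2)})$, and that its only nonzero asymptotic free cumulant is the second one, which equals $1$. By additivity of free cumulants under free independence, the free cumulants $\hat\kk_n$ of $\hat Y_N = \theta_1 X_N^{(1)} + \theta_2 X_N^{(2)} + \theta Z_N$ split as
$$\hat\kk_n = \kk_n(\theta_1 X_N^{(1)} + \theta_2 X_N^{(2)}, \dots, \theta_1 X_N^{(1)} + \theta_2 X_N^{(2)}) + \theta^2 \delta_{n,2}.$$
Next, I would expand the first term using multilinearity of the (mixed) free cumulants applied to the jointly orthogonally invariant family, giving
$$\hat\kk_n = \sum_{r_1, \dots, r_n = 1}^{2} \theta_{r_1} \cdots \theta_{r_n} \, \kk_n^{(r_1, \dots, r_n)} + \theta^2 \delta_{n,2}.$$

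Now I would check agreement with \Cref{thm:cum-of-nl-corr} case by case. For $n=1$ both sides vanish. For $n \geq 3$, substituting $\theta_r = \EE[\partial_r f(g_1,g_2)]$ yields exactly the formula $\sum_{r_1,\dots,r_n} \kk_n^{(r_1,\dots,r_n)} \prod_j \EE[\partial_{r_j} f(g_1,g_2)]$ given by the theorem. For $n=2$, the identity to verify is
$$\kk_2^{(1,1)} \theta_1^2 + 2 \kk_2^{(1,2)} \theta_1 \theta_2 + \kk_2^{(2,2)} \theta_2^2 + \theta^2 = c_2\bigl(f(g_1,g_2), f(g_1,g_2)\bigr),$$
which is precisely the defining equation for $\theta^2$ in the statement.

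The only subtle point is ensuring that the quantity under the square root defining $\theta$ is nonnegative, so that $\theta$ is a well-defined real number. This follows from a Cauchy--Schwarz type argument: the right-hand side of the above display minus its first three terms equals the variance of $f(g_1,g_2) - \theta_1 g_1 - \theta_2 g_2$ (using Stein's identity to rewrite $\theta_r = \EE[\partial_r f(g_1,g_2)]$ in terms of covariances with $g_r$), which is manifestly nonnegative. With this verified, the free cumulants of $\hat Y_N$ and $Y_N$ agree to all orders, hence so do their asymptotic eigenvalue distributions, completing the proof.
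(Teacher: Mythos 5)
Your proof is correct and follows essentially the same route as the paper, which sketches the single-matrix version in the proof of \Cref{cor:gaussian-equivalence} (asymptotic freeness of $Z_N$, additivity of free cumulants, multilinear expansion, matching against the $\kk_n^f$) and then simply asserts that the correlated multivariate case ``follows directly from the concrete form of the free cumulants.'' Your spelled-out expansion of $\hat\kk_n$ over $r_1,\dots,r_n \in \{1,2\}$ and your Stein-identity verification that the radicand equals $\mathrm{Var}\bigl(f(g_1,g_2)-\theta_1 g_1 - \theta_2 g_2\bigr)\ge 0$ are both accurate and nicely fill in details the paper leaves implicit.
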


\begin{example}
As an example, let us again consider the maximum function, but now we take
for our orthogonally invariant matrices
\begin{equation}\label{eq:XN1undXN2cor}
X_N^{(1)}=\frac{A_N^2-\sqrt{2}B_N}{\sqrt{3}},\qquad
X_N^{(2)}=\frac{A_N^4+C_NA_N+A_NC_N}{\sqrt{12}},
\end{equation}
where $A_N,B_N,C_N$ are independent standard GOE. We still have $\kk_2^{(1,1)}=1$ and $\kk_2^{(2,2)}=1$, but now we have a correlation $\kk_2^{(1,2)}=1/2$ between the two matrices, since $A_N$ appears in both of them. In this case our parameters are
$$\theta_1=\theta_2 =\frac 12,\qquad
\theta=\sqrt{\frac 14-\frac 1{2\pi}}.$$
 Figure 4 superimposes, for one realization with $N=5000$, the eigenvalue distribution of the entrywise applied max function of $X_N^{(1)}$ and $X_N^{(2)}$ to the eigenvalue distribution of 
\begin{equation}\label{eq:YNhatdrei}
\hat Y_N:=\frac 12 X_N^{(1)}+\frac 12 X_N^{(2)}+ \sqrt{\frac 14 -\frac 1{2\pi}}Z_N
\end{equation}
\end{example}

\begin{figure}\label{fig:4}
    \centering
\includegraphics[width=12cm]{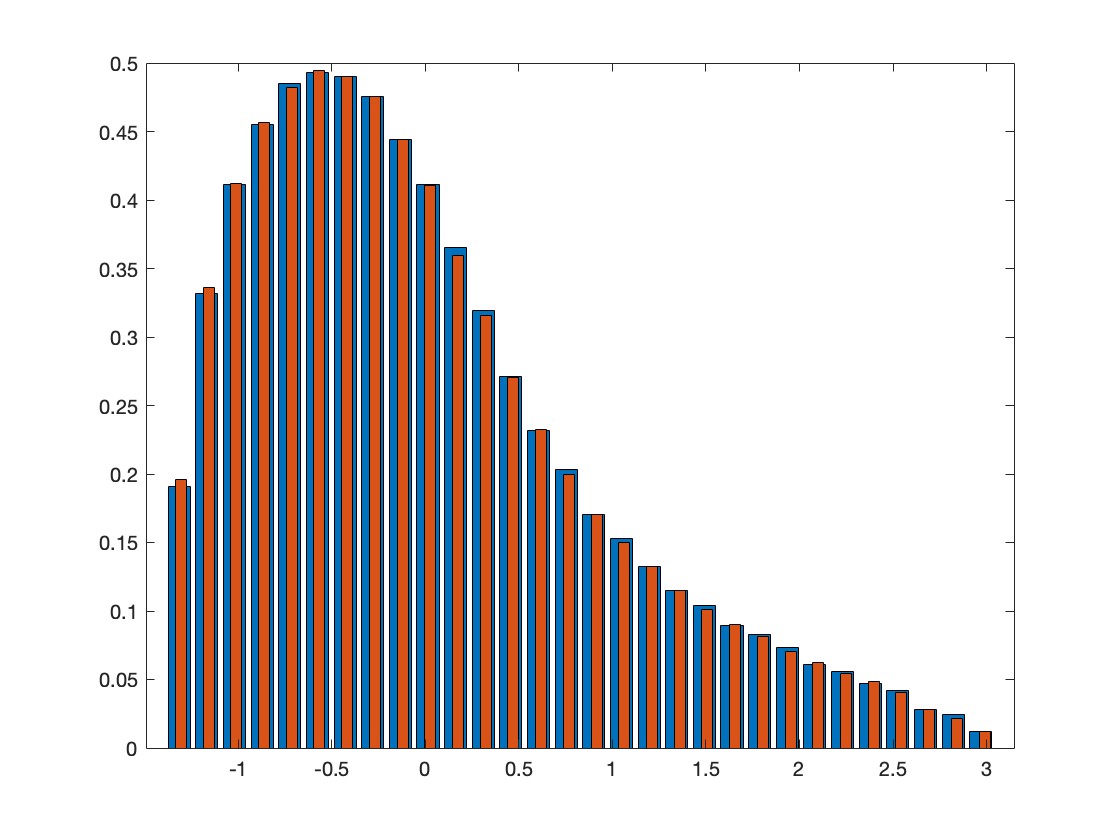}
\caption{
Superposition of the eigenvalues of the non-linear matrix $Y_N$, given as the max of the correlated matrices $X_N^{(1)}$ and $X_N^{(2)}$ from \Cref{eq:XN1undXN2cor}, and of its Gaussian equivalent $\hat Y_N$ from \Cref{eq:YNhatdrei}; $N=5000$}
\end{figure}

Let us now finally turn to giving the key arguments for the form of the cumulants in \Cref{thm:cum-of-nl-corr}.
\begin{proof}
    In case of a full cycle ($n \geq 3$) we know that the leading order is given by having a full cycle and the rest pairings, however, now the cycle may choose its elements arbitrarily from the correlated matrices, giving an asymptotic multiplicative contribution of $\kappa_n^{({r_1}, \dots ,{r_n})} \cdot N^{1-n}$ and the pairings may also pair arbitrarily within one group. Thus, in the limit $N \rightarrow \infty$, we will obtain the same as if we were computing the joint cumulants of the following random variables: Assume we have already chosen a full cycle with corresponding matrices $X_N^{(r_1)}, \dots, X_N^{(r_n)}$ and define a Gaussian family of random variables $h^{(r)}_{j}$ via having zero mean and covariance $c_2(h^{(r)}_{i}, h^{(s)}_{j}) = \delta_{ij}  \kk_2^{(r,s)}$. Then, we will obtain in the limit $N \rightarrow \infty$ the same as if we were computing the joint cumulant
    \begin{align*}
    &c_n((x_{r_1})_{i_1i_2}(h_{1}^{(r_1)})^{m_1^{(r_1)} -1} \prod_{s \neq r_1}  (h_{1}^{(s)})^{m_1^{(s)}},  \dots , (x_{r_n})_{i_ni_1}(h_{n}^{(r_n)})^{m_n^{(r_n)} -1} \prod_{s \neq r_n}  (h_{n}^{(s)})^{m_n^{(s)}}).
    \end{align*}
    And thus
    \begin{align*}
        \kk_n^{f_1, \dots , f_n} &= \sum_{r_1, \dots , r_n = 1}^l m_1^{(r_1)} \cdots m_n^{(r_n)} \kappa_n^{({r_1}, \dots ,{r_n})}  \mathbb{E} \left[ \prod_j \left( (h_{j}^{(r_j)})^{m_j^{(r_j)} -1} \prod_{s \neq r_j}  (h_{j}^{(s)})^{m_j^{(s)}}\right) \right] \\
        &= \sum_{r_1, \dots , r_n = 1}^l \kappa_n^{({r_1}, \dots ,{r_n})} \mathbb{E} \left[ \prod_j \partial_{r_j} f_j(h_j^{(1)},\dots,h_j^{(l)})\right] \\
        &= \sum_{r_1, \dots , r_n = 1}^l \kappa_n^{({r_1}, \dots ,{r_n})} \prod_j  \mathbb{E} \left[ \partial_{r_j} f_j(g_1,\dots,g_l)\right].
    \end{align*}
The last step follows from the fact that the $h_j^{(r)}$ are independent for different index $j$ and that for fixed $j$ they have the same covariance as $g_1,\dots,g_l$.
By multilinearity of the cumulants the results follow for general centered polynomials. 
    
    In the case $n=2$, we have to take into account that now, by the symmetry of our matrices, the pairings can also connect the two groups. Since there are now only pairings involved for the highest order, we get in this case
   directly 
   $$\kk_2^{f_1,f_2}=c_2(f_1(g_1, \dots, g_l),f_2(g_1, \dots, g_l)).$$

     If there are multiple cycles in the index, then we can again use the same arguments as in the single matrix/independent matrix case with the slight difference, that now we may also connect elements corresponding to different matrices, each giving a contribution $\kk_d^{(r_1, \dots, r_d )}$ for the subcycles and $\kk_2^{(r, s)}$ depending on which matrices are being paired but the maximal order will remain the same. The same also holds in case several disconnected cycles are being connected.
     
    The statement on the Gaussian equivalence principle follows directly from the concrete form of the free cumulants.
\end{proof}

\section{Concluding remark}
An interesting question for further investigation is whether we still have asymptotic freeness of our random matrices $Y_N$ from deterministic matrices. Note that this is one of the key features of orthogonally invariant random matrices. So it is plausible that this survives also under the entrywise non-linearities. However, the proof of this is not a direct consequence of our present results. If we restrict our setting to GOE, then applying a non-linearity makes the transition from the GOE to Wigner matrices; and our result reduces to the well-known fact that we still have a semicircular distribution in this case. One also knows that one still has asymptotic freeness of Wigner matrices from deterministic matrices, but showing this was technically more demanding than anticipated; for proofs see \cite{mingo2017free,anderson2010introduction}. We expect that following the same ideas as in \cite{mingo2017free}, relying on graph sum estimates, one should be able to prove the asymptotic freeness of our random matrices $Y_N$ from deterministic matrices.

\appendix

\section{Notations and Preliminaries}\label{section3}

\subsection{Some general notations}
For natural numbers $m,n\in\NN$ with $m<n$, we denote by $[m,n]$
the interval of natural numbers between $m$ and $n$, i.e.,
$$[m,n]:=\{m,m+1,m+2,\dots,n-1,n\};$$
furthermore, we put $[n]:=[1,n]$.

For a matrix $A=(a_{ij})_{i,j=1}^N$, we denote by $\Tr$ the
unnormalized and by $\tr$ the normalized trace,
$$\Tr(A):=\sum_{i=1}^N a_{ii},\qquad \tr(A):=\frac 1N \Tr(A).$$

We denote the set of permutations on $n$ elements by
$S_n$.

We say that $\pi=\{V_1,\dots,V_k\}$ is a \emph{partition} of a set $[1,n]$ if the sets $V_i$ are
disjoint and non--empty and their union is equal to $[1,n]$. We call $V_1,\dots,V_k$ the \emph{blocks}
of partition $\pi$. If all blocks of $\pi$ have size 2 then we call $\pi$ a \emph{pairing} or \emph{pair-partition}. We denote the set of all partitions of the set $[n]$ by $\cP(n)$ and the set of all pairings of the set $[n]$ (which is only non-empty if $n$ is even) by $\cP_2(n)$.

If $\pi=\{V_1,\dots,V_k\}$ and $\sigma=\{W_1,\dots,W_l\}$ are partitions of the same set, we say
that $\pi\leq \sigma$ if for every block $V_i$ there exists some block $W_j$ such that
$V_i\subseteq W_j$. For a pair of partitions $\pi,\sigma$ we denote by $\pi\vee \sigma$ the smallest
partition $\tau$ such that $\pi\leq \tau$ and $\sigma\leq \tau$. We denote by $1_n=\big\{ [n]
\big\}$ the biggest partition of the set $[n]$.

Let $i=(i_1,\dots,i_n)$ be a multi-index.
Then its \emph{kernel}, denoted by $\ker (i)$, is that partition in $\cP(n)$ whose blocks correspond exactly to the different values of the indices, that is
$$\text{$k$ and $l$ are in the same block of $\ker( i)$}\Longleftrightarrow i_k=i_l.
$$

\subsection{Classical cumulants}
Given some classical probability space $(\Omega,\ab P)$ we
denote by
$\EE$ the expectation with respect to the corresponding probability measure,
$$\EE(a):=\int_\Omega a(\omega)dP(\omega)$$
and by $L^{\infty-}(\Omega,P)$ the algebra of random variables for which all moments
exist.
Let us for the following put $\cA:=L^{\infty-}(\Omega,P)$.

We extend the linear functional $\EE:\cA\to\CC$
to a corresponding multiplicative functional on all
partitions by ($\pi\in \cP(n)$, $a_1,\dots,a_n\in\cA$)
\begin{equation}
\EE_\pi[a_1,\dots,a_n]:=\prod_{V\in\pi} \EE[a_1,\dots,a_n\vert_V],
\end{equation}
where we use the notation
$$\EE[a_1,\dots,a_n\vert_V]:=\EE(a_{i_1}\cdots a_{i_s})\qquad\text{for}\qquad
V=(i_1<\dots<i_s)\in\pi.$$

Then, for $\pi\in \cP(n)$, we define the \emph{classical cumulants} $\cc_\pi$ as multilinear
functionals on $\cA$ by
\begin{equation}
\cc_\pi[a_1,\dots,a_n]=\sum_{\substack{\sigma \in \cP(n)\\ \sigma\leq \pi}} \EE_\sigma[a_1,\dots,a_n]\cdot
\moeb_{}(\sigma,\pi),
\end{equation}
where $\moeb_{}$ denotes the M\"obius function on $\cP(n)$
(see \cite{rota1964foundations}).

The above definition is, by M\"obius inversion on $\cP(n)$, equivalent to
$$\EE(a_1\cdots a_n)=\sum_{\pi\in\cP(n)}\cc_\pi[a_1,\dots,a_n].$$
The $\cc_\pi$ are also multiplicative with respect to the blocks
of $\pi$ and thus determined by the values of
$$\cc_n(a_1,\dots,a_n):=\cc_{1_n}[a_1,\dots,a_n].$$

Note that we have in particular
$$\cc_1(a)=\EE(a)\qquad\text{and}\qquad
\cc_2(a_1,a_2)=\EE(a_1a_2)-\EE(a_1)\EE(a_2).$$

An important property of classical cumulants is the formula
of Leonov and Shiryaev \cite{leonov1959method} for cumulants with products as arguments; for the following formulation see \cite{mingo2017free}[Theorem~11.30].

\begin{theorem}[Formula of Leonov and Shiryaev] \label{Leonov}
    Consider a non-commutative probability space $(A, \varphi)$ and let $(c_\pi)_{\pi \in P}$ be the corresponding classical cumulants. Let $m, n \in \mathbb{N}$ and $1 \leq i(1) < i(2) < \cdots < i(m) = n$ be given and put
$$
\tau = \{(1, \ldots, i(1)), \ldots, (i(m - 1) + 1, \ldots, i(m))\}\in\cP(n).
$$
Consider now random variables $a_1, \ldots, a_n \in A$. Then we have
$$
c_m(a_1 \cdots a_{i(1)}, \ldots, a_{i(m-1)+1} \cdots a_{i(m)}) = \sum_{\substack{\pi \in P(n)\\ \pi \vee \tau = 1_n}} c_\pi[a_1, \ldots, a_n].
$$
\end{theorem}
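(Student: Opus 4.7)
The plan is to prove the Leonov--Shiryaev formula by applying M\"obius inversion twice and exploiting the standard lattice isomorphism between $\cP(m)$ and the interval $[\tau, 1_n] \subseteq \cP(n)$. Set $b_k := a_{i(k-1)+1} \cdots a_{i(k)}$ for $k=1,\dots,m$ (with $i(0)=0$), so that the left-hand side is $c_m(b_1,\dots,b_m)$. First, I would write the defining M\"obius inversion for classical cumulants,
\begin{equation*}
c_m(b_1,\dots,b_m) = \sum_{\sigma\in\cP(m)} \EE_\sigma[b_1,\dots,b_m]\, \moeb(\sigma, 1_m),
\end{equation*}
and then expand each factor $\EE(\prod_{l\in V} b_l)$ making up $\EE_\sigma$ by the moment--cumulant formula applied to the corresponding set of $a$-indices. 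Gathering these expansions across all blocks of $\sigma$ yields
\begin{equation*}
\EE_\sigma[b_1,\dots,b_m] = \sum_{\substack{\pi\in\cP(n)\\ \pi \leq \hat\sigma}} c_\pi[a_1,\dots,a_n],
\end{equation*}
where $\hat\sigma \in \cP(n)$ is the ``blown-up'' partition obtained from $\sigma$ by replacing each $k\in[m]$ by the block $\{i(k-1)+1,\dots,i(k)\}$ of $\tau$. The map $\sigma\mapsto\hat\sigma$ is an order isomorphism from $\cP(m)$ onto the interval $[\tau,1_n]\subseteq\cP(n)$, and in particular $\tau\leq\hat\sigma$ for every $\sigma\in\cP(m)$.

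Substituting the second display into the first and exchanging the order of summation gives
\begin{equation*}
c_m(b_1,\dots,b_m) = \sum_{\pi\in\cP(n)} c_\pi[a_1,\dots,a_n] \sum_{\substack{\sigma\in\cP(m)\\ \pi \leq \hat\sigma}} \moeb(\sigma,1_m).
\end{equation*}
Because $\tau\leq\hat\sigma$ holds automatically, the constraint $\pi\leq\hat\sigma$ is equivalent to $\pi\vee\tau\leq\hat\sigma$. Now $\pi\vee\tau$ lies in $[\tau,1_n]$, hence corresponds under the isomorphism above to some $\rho\in\cP(m)$, and the inner sum becomes $\sum_{\sigma\geq\rho}\moeb(\sigma,1_m)$. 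By the defining property of the M\"obius function on $\cP(m)$ this equals $\delta_{\rho,1_m}$, so only those $\pi$'s with $\rho=1_m$, i.e.\ with $\pi\vee\tau=1_n$, survive, each with coefficient $1$. This is precisely the Leonov--Shiryaev identity.

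I expect the main subtlety to lie in justifying the second display carefully: one must verify that the product-of-sums obtained by expanding each $\EE(\prod_{l\in V}b_l)$ block-by-block produces \emph{exactly} the partitions $\pi\in\cP(n)$ refining $\hat\sigma$, with no partition counted twice and none missed; this is essentially a bookkeeping argument using the multiplicativity of $c_\pi$ across the blocks of $\hat\sigma$. A further point worth recording is that the M\"obius function of $\cP(m)$ transports correctly to that of the sublattice $[\tau,1_n]$ under the isomorphism $\sigma\mapsto\hat\sigma$, which is a standard property of M\"obius functions on isomorphic finite posets. Commutativity of the $a_i$'s is never used: within each $b_l$ the variables appear in their original order inherited from $[n]$, and both sides of the identity respect that order throughout.
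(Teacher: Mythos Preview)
Your argument is correct and is in fact the standard textbook proof of the Leonov--Shiryaev identity (see, e.g., \cite{nica2006lectures} or \cite{mingo2017free}). Note, however, that the paper does not supply its own proof of this theorem: it is stated in the appendix as a classical result, with a reference to \cite{leonov1959method} for the original and to \cite[Theorem~11.30]{mingo2017free} for the formulation used, followed only by an illustrative example. So there is no in-paper proof to compare against; your write-up simply fills in what the paper chose to cite.
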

Note that here $\vee$ denotes the join in the lattice of all partitions, see for example \cite{nica2006lectures} or \cite{mingo2017free} for more information on the lattice of (non-)crossing partitions.

The sum on the right-hand side is running over those partitions $\pi$ of $n$ elements
which satisfy $\pi\vee\tau=1_n$, which are, informally speaking, those
partitions which connect, together with $\tau$,
all the arguments of the cumulant $\cc_m$, when written in terms of the $a_i$.

Here is an example for this formula, for $\cc_2(a_1a_2,a_3a_4)$.
In order to reduce the number
of involved terms we will restrict to the special case where $\EE(a_i)=0$ (and thus also
$\cc_1(a_i)=0$) for all $i=1,2,3,4$.
There are three partitions $\pi\in\cP(4)$ without singletons which satisfy
$$\pi\vee \{(1,2),(3,4)\}=1_4,$$ namely
$$
\setlength{\unitlength}{0.3cm}
\begin{picture}(10,10)
  \thicklines
  \put(0,3){\line(0,1){2}}
  \put(0,3){\line(1,0){9}}
  \put(6,4){\line(0,1){1}}
  \put(3,4){\line(0,1){1}}
  \put(9,3){\line(0,1){2}}
\put(3,4){\line(1,0){3}} \thicklines
  \put(0,6){\line(0,1){2}}
  \put(0,6){\line(1,0){9}}
  \put(3,6){\line(0,1){2}}
  \put(6,6){\line(0,1){2}}
\put(9,6){\line(0,1){2}}
  \thicklines
  \put(0,1){\line(0,1){1}}
\put(0,1){\line(1,0){6}} \put(6,1){\line(0,1){1}}
  \put(3,0){\line(0,1){2}}
  \put(3,0){\line(1,0){6}}
  \put(9,0){\line(0,1){2}}
 \put(-.25,9.5){\dashbox{0.2}(4.2,1.5){}}
 \put(5.5,9.5){\dashbox{0.2}(4,1.5){}}
\put(0,10){$a_1$} \put(2.5,10){$a_2$} \put(5.75,10){$a_3$} \put(8,10){$a_4$}
\end{picture}
$$
and thus formula (\ref{Leonov}) gives in this case
\begin{multline*}
\cc_2(a_1a_2,a_3a_4)=\cc_4(a_1,a_2,a_3,a_4)+
\cc_2(a_1,a_4)\cc_2(a_2,a_3)+\cc_2(a_1,a_3)\cc_2(a_2,a_4).
\end{multline*}

\subsection{Haar distributed orthogonal random matrices and
            their Weingarten function}

In the following we will be interested in the asymptotics of
special matrix integrals over the group $\UU(N)$ of orthogonal
$N\times N$-matrices. We always equip the compact group
$\UU(N)$ with its Haar probability measure.
A random matrix whose distribution is this measure
will be called a \emph{Haar distributed orthogonal random
matrix}. Thus the expectation $\EE$ over this ensemble is
given by integrating with respect to the Haar measure.

The expectation of products of entries of Haar distributed
orthogonal random matrices can be described in terms of a
special function on pair partitions. Since such
considerations, in the unitary case, go back to Weingarten, Collins
\cite{collins2003moments} calls this function the \emph{Weingarten function} and
denotes it by $\Wg$. We will follow his notation.





We will only use the following formula to express general matrix integrals over the orthogonal group in terms of the Weingarten function. For all $n\in\NN$ and all $1\leq p_1,\dots,p_n,i_1,\dots,i_n\leq N$ we have
\begin{equation}\label{eq:Wick-unitary}
\EE\{u_{p_1 i_1} \cdots
u_{p_ni_n} \}
= \sum_{\pi,\sigma\in \cP_2(n)} \delta_{p,
\pi}\delta_{i,\sigma}\Wg(\pi,\sigma),
\end{equation}
where 
$$\delta_{p,\pi}=
\begin{cases}
1,&\text{if $\pi\geq \ker(p)$}\\
0,&\text{otherwise}
\end{cases}
$$
Note that this includes in particular the statement that all odd moments are zero (because $\cP_1(n)=\emptyset$ for $n$ odd).
This formula for the calculation of moments of the entries
of a Haar orthogonal random matrix bears some resemblance to the
Wick formula for the joint moments of the entries
of Gaussian random matrices. 
For more details on this Weingarten function, in particular also its asymptotic behaviour, we refer to
\cite{collins2006integration}.

\section{Correlation functions and cumulants for random matrices}
\label{sec:correlation}

\subsection{Correlation functions and partitioned pairings}
Let us consider $N\times N$-random matrices $B_1,\dots,B_n:\Omega\to M_N(\CC)$.
The
main information we are interested in are the ``correlation
functions" $\ff^{}_n$ of these matrices, given by classical
cumulants of their traces, i.e.,
$$\ff^{}_n(B_1,\dots,B_n):=\cc_n(\Tr(B_1),\dots,\Tr(B_n)).$$
Even though these correlation functions are cumulants, it is more adequate
to consider them as a kind of moments for our random matrices. Thus, we will
also call them sometimes \emph{correlation moments}.

We will also need to consider traces of products of the $B_i$ and their transposes, which are best
encoded via pair partitions. Thus, for $\pi\in\cP_2(2n)$,
$\ff^{}(\pi)[B_1,\dots,B_n]$ shall mean that we take cumulants of
traces of products along the cycles of $\hat\pi$. For $\pi\in\cP_2(2n)$ we denote here by $\hat\pi\in\cP(n)$ the partition in $\cP(n)$ that encodes which positions of the $B_i$ are connected by the blocks of $\pi$; more formally, $\hat\pi$ is the restriction of 
$$\pi\vee \{(1,2),(3,4),\dots,(2n-1,2n)\}$$
to the odd numbers $1,3,\dots,2n-1$. 
It is also useful to have an operation in the other direction, namely to lift an $\alpha\in\cP(n)$ to an $\check \alpha\in\cP(2n)$ by viewing $\alpha$ as a partition of the odd numbers $1,3,5,\dots,2n-1$ and $\check\alpha$ as a partition of the odd and even numers $1,2,3,4,\dots,2n-1,2n$, and then adding an even number to the block of $\alpha$ which contains the preceding odd number. Note that, for $\alpha\in\cP(n)$ and $\pi\in\cP_2(2n)$, the condition $\alpha\geq \hat \pi$ is equivalent to the condition $\check\alpha\geq \pi$. Furthermore, we have 
$$\check{\hat\pi}=\pi\vee \{(1,2),(3,4),\dots,(2n-1,2n)\}.$$
Applying the two operations in the other order does not make sense, since we have defined $\hat \pi$ only for pairings $\pi$.

If necessary, we can consider $\hat\pi$ also as a permutation in $S(n)$, since the pairs of $\pi$ give a cyclic structure on the blocks of $\hat\pi$ via matrix multiplication of the $B_i$ and their transposes. Note that $\pi\in \cP_2(2n)$ contains more information than $\hat\pi\in \cP(n)$, since the connection between a $B_l$ and a $B_k$ can happen by connecting the corresponding pairs of indices either by connecting two indices of the same parity or two indices of opposite parity. This information is relevant when we multiply the $B_i$ in the traces, since in the first case we must flip the second factor to its transpose. We prefer not to make this rigorous via a formal definition.
See \cite{mingo2013real}, or also the nice exposition \cite{collins2022weingarten} for this.

For an $n$-tuple
$B=(B_1,\dots,B_n)$ of random matrices and a cycle
$c=(i_1,i_2,\dots,i_k)$ of $\hat\pi$ with $k\leq n$ we denote by
$B\vert_c$ the product of the involved $B_i$ in the cycle order and with flips between transpose or not according to the information provided by $\pi$. 
Since we will apply the trace to such products, it does not matter were we start in our cycle, nor whether we start with a transpose or not. 

For any $\pi\in \cP_2(2n)$ and any $n$-tuple $B=(B_1,\dots,B_n)$ of random matrices we put then
$$\ff(\pi)[B_1,\dots,B_n]:=\ff_r(B\vert_{c_1},\dots,
B\vert_{c_r}),$$ where $c_1,\dots,c_r$ are the cycles of $\hat\pi$.

Example: Consider $\pi=\{(1,12),(2,10),(3,5),(4,6),(7,8),(9,11)\}\in \cP_2(12)$. Then $\hat\pi\in\cP_6$ is given by
$\{\{1,5,6\},\{2,3\},\{4\}\}$; and starting with the smallest element in each block will result in the corresponding cyclic structure $(1,5,6) (2,3)(4) \in S(6)$ and thus, by also taking into account that we need flips to the transposes by going from 1 to 5, from 5 to 6, and from 2 to 3,
\begin{align*}
\ff^{}((1,12),(2,10),(3,5),(4,6),(7,8),(9,11))[B_1,B_2,B_3,B_4,B_5,B_6]\\=
\ff^{}_3(B_1B_5^TB_6, B_2B_3^T,B_4)
=\cc_3(\Tr(B_1B_5^TB_6),\Tr(B_2B_3^T),\Tr(B_4)))
\end{align*}
Note that we could also start the first cycle at 5, but then we would get as cycle structure cycle $(5,1,6)$ with a flip by going from 5 to 1. However, the product $B\vert_c$ is $B_5B_1^TB_6^T$ gives under the  application of the trace the same as the product $B_1B_5^TB_6$, since
$$\Tr(B_5B_1^TB_6^T)=\Tr((B_6B_1B_5^T)^T)=\Tr(B_6B_1B_5^T)=\Tr(B_1B_5^TB_6).$$

Furthermore, we also need to consider more general products of such
$\ff(\pi)$'s. In order to index such products we will use pairs
$(\alpha,\pi)$ where $\pi$ is, as above, an element in $\cP_2(2n)$, and
$\alpha\in\cP(n)$ is a partition which is compatible with the cycle
structure of $\hat\pi$, i.e., $\alpha\geq\hat\pi$.

\begin{notation}
A \emph{partitioned pairing} is a pair $(\alpha,\pi)$ consisting of
$\pi\in \cP_2(2n)$ and $\alpha\in\cP(n)$ with $\alpha\geq\hat \pi$
(or equivalently, $\check\alpha\geq \pi$).

We will denote
the set of partitioned pairings of $n$ elements by $\cPP(n)$.
For such a $(\alpha,\pi)\in\cPP(n)$ we denote
$$\ff^{}(\alpha,\pi)[B_1,\dots,B_n]:=\prod_{V\in\cV}\ff(\pi\vert_V)
[B_1,\dots,B_n\vert_V].$$
\end{notation}

Example:
\begin{align*}
\ff\bigl(\{1,5\}\{2,3,4\},\{(1,10),(2,9),(3,5),(4,6),(7,8) \}\bigr)&[B_1,B_2,B_3,B_4,B_5]\\=
k_1(\Tr(B_1B_5))\cdot k_2(\Tr(B_2B_3^T),\Tr(B_4))
\end{align*}

Let us denote by $\Tr_\sigma$ for $\sigma \in\cP_2(2n)$ the product of traces along the blocks of $\sigma$. Then
we have the relation
$$\EE\{\Tr_\sigma[A_1,\dots,A_n]\}=
\sum_{\substack{\beta\in\cP(n)\\ \beta\geq\hat\sigma}} \ff(\beta,\sigma)[A_1,\dots,A_n].$$

By using the formula of Leonov and Shiryaev, \Cref{Leonov}, one
sees that in terms of the entries of our matrices $A_k=(a_{ij}^{(k)})_{i,j=1}^N$  our
$\ff(\beta,\sigma)$ can also be written as
\begin{equation}\label{eq:ff-entries}
\ff^{}(\beta,\sigma)[A_1,\dots,A_n]=\sum_{\substack{\alpha\leq\beta\\
\alpha\vee\hat\sigma=\beta}} \sum_{p_k,r_k=1}^N 
\delta_{[p,r],\sigma}
k_\alpha[a^{(1)}_{p_1r_1},\dots,
a^{(n)}_{p_nr_n}],
\end{equation}
where we denote with $[p,r]$ the index $2n$-tuple $(p_1,r_1,p_2,r_2,\dots,p_n,r_n)$

\subsection{Moments of orthogonally invariant random matrices}

For orthogonally invariant random matrices there exists a definite relation between
cumulants of traces and cumulants of entries.
We want to work out this connection
in this section.

\begin{definition}
Random matrices $A_1,\dots,A_n$ are called \emph{orthogonally
invariant} if the joint distribution of all their entries
does not change by global conjugation with any orthogonal
matrix, i.e., if, for any orthogonal matrix $O\in\UU(N)$, the
matrix-valued random variables $A_1,\dots, A_n:\Omega\to
M_N(\CC)$ have the same joint distribution as the
matrix-valued random variables $OA_1O^T, \dots, OA_nO^T :
\Omega \to M_N(\CC)$.
\end{definition}

Let $A_1,\dots,A_n$ be orthogonally invariant symmetric random matrices. We will now
try expressing the microscopic quantities ``cumulants of entries
of the $A_i$" in terms of the macroscopic quantities ``cumulants
of traces of products of the $A_i$".

In order to make this connection we have to use the orthogonal invariance of our
ensemble. By definition,
this
means that $A_1,\dots,A_n$ has the same distribution as
$\tilde A_1,\dots,\tilde A_n$ where $\tilde A_i:=OA_iO^T$.
Since this holds for any orthogonal $O$, the same is true after averaging over such $O$, i.e.,
we can take in the definition of the $\tilde A_i$ the $O$ as
Haar distributed
orthogonal random matrices, independent from $A_1,\dots,A_n$.
This reduces calculations for orthogonally invariant ensembles
essentially to properties of Haar orthogonal random matrices; in particular,
the Wick
formula for the $O$'s implies that we have an analogous Wick formula for joint moments in the
entries of the  $A_i$. Let us write
$A_k=(a_{ij}^{(k)})_{i,j=1}^N$ and $\tilde A_k=(\tilde a_{ij}^{(k)})_{i,j=1}^N$. Then we
can calculate:
\begin{align*}
\EE\bigl\{a_{p_1 r_{1}}^{(1)} \cdots\
         a_{p_{n} r_n}^{(n)}
\bigr\}&=\EE
\bigl\{\tilde a_{p_1 r_{1}}^{(1)} \cdots\
         \tilde a_{p_{n} r_n}^{(n)}
\bigr\}\\
&=\sum_{i,j} \EE\{u_{p_1 i_1}a^{(1)}_{i_1j_1}
{u_{r_1j_1}}\cdots u_{p_ni_n}a^{(n)}_{i_nj_n} {u_{r_nj_n}}\}\\
&=\sum_{i,j}\EE\{u_{p_1 i_1} {u_{r_1j_1}}\cdots
u_{p_ni_n} {u_{r_nj_n}}\}\cdot \EE\{a^{(1)}_{i_1j_1}\cdots
a^{(n)}_{i_nj_n}\}
\\
&= \sum_{i,j}\sum_{\pi,\sigma\in \cP_2(2n)} \delta_{[p,r],
\pi}\delta_{[i,j],\sigma}\Wg(\pi,\sigma) \cdot
\EE\{a^{(1)}_{i_1j_1}\cdots a^{(n)}_{i_nj_n}\},
\end{align*}
where we denote with $[p,r]$ the index $2n$-tuple $(p_1,r_1,p_2,r_2,\dots,p_n,r_n)$. Then we can write
$$\EE\bigl\{a_{p_1 r_{1}}^{(1)} \cdots\
         a_{p_{n} r_n}^{(n)}\bigr\}
=\sum_{\pi\in\cP_2(2n)}\delta_{[p,r],\pi}\cdot \GG(\pi)[A_1,\dots,A_n],
$$
where
\begin{align}
\label{eq:definitionofGG}
\GG(\pi)[A_1,\dots,A_n]:&= \sum_{\sigma\in\cP_2(2n)}
\Wg(\pi,\sigma) \cdot\sum_{i,j}\delta_{[i,j],\sigma}
\EE\{a^{(1)}_{i_1j_1}\cdots a^{(n)}_{i_nj_n}\}.
\end{align}
Recall that we have
$$\sum_{i,j}\delta_{[i,j],\sigma}
a^{(1)}_{i_1j_1}\cdots a^{(n)}_{i_nj_n}=\sum_ {\substack{i,j \\ \ker([i,j])\geq \sigma }}
a^{(1)}_{i_1j_1}\cdots a^{(n)}_{i_nj_n}=
\Tr_{\sigma}[A_1,\dots,A_n].$$

So we have 
\begin{align}
\GG(\pi)[A_1,\dots,A_n]&=\sum_{\sigma\in\cP_2(2n)} \Wg(\pi,\sigma) \cdot \EE\{\Tr_{\sigma}[A_1,\dots,A_n]\}.\notag\\
&=\sum_{\sigma\in\cP_2(2n)} \Wg(\pi,\sigma)\cdot\sum_{\substack{\alpha\in\cP(n)\\ \alpha\geq\hat\sigma}}
\ff(\alpha,\sigma)[A_1,\dots,A_n]\notag\\
&=\sum_{(\alpha,\sigma)\in\cPP(n)}\Wg(\pi,\sigma)\cdot \ff(\alpha,\sigma)[A_1,\dots,A_n].\notag
\end{align}
The important point here is that $\GG(\pi)[A_1,\dots,A_n]$ depends only on the macroscopic
correlation moments of $A$.

We can extend the above to products of expectations by
\begin{align*}
\EE_\beta [a_{p_1 r_{1}},\dots,
        a_{p_{n} r_n}]=\sum_{\substack{\pi\in \cP_2(2n)\\ \hat\pi\leq \beta}}
\delta_{[p,r],\pi}\cdot \GG(\beta,\pi)[A_1,\dots,A_n]\qquad\text{for $\beta\in \cP(n)$},
\end{align*}
where $\GG(\beta,\pi)$ is given by
multiplicative extension:
\begin{align}
\label{eq:definitionofGGmulti}\GG(\beta,\pi)[A_1,\dots,A_n]:&=\prod_{V\in
\beta}\GG(\pi\vert_V)[A_1,\dots,A_n\vert_V]\notag\\
&=\sum_{\substack{(\alpha,\sigma)\in\cPP(n)\\ \alpha\leq\beta}} \Wg_\beta(\pi,\sigma)\cdot
\ff(\alpha,\sigma)[A_1,\dots,A_n].
\end{align}
We have here also used the multiplicative extension of the Weingarten function: for $\beta\geq\hat \pi\vee\hat\sigma$, we put
$$\Wg_\beta(\pi,\sigma):=\prod_{V\in\check\beta}\Wg(\pi|_{V},\sigma|_{V}).$$

Now we can look on the cumulants of the entries of our unitarily invariant matrices $A_i$; they
are, for $\alpha\in\cP(n)$, given by
\begin{align*}k_\alpha\bigl\{a^{(1)}_{p_1 r_{1}},\dots,a^{(n)}_{p_{n} r_n}\}&=
\sum_{\substack{{\beta}\in\cP(n)\\ {\beta}\leq \alpha}} \moeb_{}(\beta,\alpha)\cdot\EE_{\beta} [a^{(1)}_{p_1
r_{1}},\dots,
        a^{(n)}_{p_{n} r_n}]\\
&=\sum_{{\beta}\leq \alpha}\sum_{\substack{\pi\in \cP_2(2n)\\ \hat\pi\leq \beta}}
\delta_{[p,r],\pi}\moeb_{}(\beta,\alpha)\cdot \GG(\beta,\pi)[A_1,\dots,A_n]\\
&=\sum_{\substack{\pi\in \cP_2(2n)\\ \hat\pi\leq \alpha}}
\delta_{[p,r],\pi}\sum_{\substack{\beta \in\cP(n)\\ \alpha\geq
{\beta}\geq\hat\pi}}\moeb_{}(\beta,\alpha)\cdot \GG(\beta,\pi)[A_1,\dots,A_n].
\end{align*}
With the definition
\begin{equation}\label{eq:def-KK}
\KK(\alpha,\pi)[A_1,\dots,A_n]:=\sum_{\substack{\beta \in\cP(n)\\ \alpha\geq
{\beta}\geq\hat\pi}}
\moeb_{}(\beta,\alpha)\cdot \GG(\beta,\pi)[A_1,\dots,A_n]
\end{equation}
we thereby get
\begin{equation}\label{eq:k-KK}
k_\alpha\bigl\{a^{(1)}_{p_1 r_{1}},\dots,a^{(n)}_{p_{n} r_n}\}=
\sum_{\substack{\pi\in \cP_2(2n)\\ \hat\pi\leq \alpha}}
\delta_{[p,r],\pi}\cdot\KK(\alpha,\pi)[A_1,\dots,A_n];
\end{equation}
in particular
\begin{equation}\label{eq:k-KK2}
k_n\bigl\{a^{(1)}_{p_1 r_{1}},\dots,a^{(n)}_{p_{n} r_n}\}=
\sum_{\pi\in \cP_2(2n)}
\delta_{[p,r],\pi}\cdot\KK(1_n,\pi)[A_1,\dots,A_n];
\end{equation}
It
follows that
\begin{align*}
\ff^{}(\beta,\sigma)[A_1,\dots,A_n]&=\sum_{\substack{\alpha\leq\beta\\
\alpha\vee\hat\sigma=\beta}} \sum_{p,r} 
\delta_{[p,r],\sigma}
k_\alpha[a^{(1)}_{p_1r_1},\dots,
a^{(n)}_{p_nr_n}]\\
&=   \sum_{\substack{\alpha\leq\beta\\
\alpha\vee\hat\sigma=\beta}} \sum_{p,r} \delta_{[p,r],\sigma}
\sum_{\substack{\pi\in \cP_2(2n)\\ \hat\pi\leq \alpha}}
\delta_{[p,r],\pi}\cdot\KK(\alpha,\pi)[A_1,\dots,A_n]\\
&=\sum_{\substack{\alpha\leq\beta\\
\alpha\vee\hat\sigma=\beta}} \sum_{\substack{\pi\in \cP_2(2n)\\ \hat\pi\leq \alpha}} \KK(\alpha,\pi)[A_1,\dots,A_n]\cdot N^{\#(\sigma\vee\pi)}
\end{align*}
and thus

\begin{equation}\label{eq:ff-kk}
\ff^{}(\beta,\sigma)[A_1,\dots,A_n]
=\sum_{\substack{(\alpha,\pi)\in\cPP(n)\\
\alpha\vee\hat\sigma=\beta}}\KK(\alpha,\pi)[A_1,\dots,A_n] \cdot N^{\#(\sigma\vee\pi)}.
\end{equation}

\begin{remark}

1) Note that although the quantity $\KK$ is defined by \Cref{eq:def-KK} in terms of the
macroscopic moments of the $A_i$, they have also a very concrete meaning in terms of cumulants
of entries of the $A_i$. Namely, if we choose $\pi \in \cP_2(2n)$ and $1\leq p_1,q_1,\dots,p_n,q_n\leq N$ such that each index appears exactly twice, then
\Cref{eq:k-KK} becomes, when we set $\alpha = 1_n$,
\begin{equation}
\KK(1_n,\pi)[A_1,\dots,A_n]=
k_n\bigl\{a^{(1)}_{p_1 r_{1}},\dots,a^{(n)}_{p_{n} r_n}\}\qquad
\text{if $\ker([p,q])=\pi$}
\end{equation}
as the only term in the sum that survives is the one for
$\pi$.


2) \Cref{eq:ff-kk} should be considered as
a kind of moment-cumulant formula in our context, thus it should
contain all information for defining the ``cumulants"
$\KK$ in terms of the moments $\ff$. Actually, we can solve this
linear system of equations for
$\KK$ in terms of $\ff$, by using 
\Cref{eq:def-KK} to define $\KK$ and
\Cref{eq:definitionofGGmulti} for $\GG$.
\begin{align}
\notag \KK(\alpha,\pi)[A_1,\dots,A_n]&=\sum_{\substack{\beta \in\cP(n)\\  \alpha\geq
{\beta}\geq\hat\pi}}\moeb_{}(\beta,\alpha)\cdot \GG(\beta,\pi)[A_1,\dots,A_n]\\ \notag
&=\sum_{\substack{\beta \in\cP(n)\\ \alpha\geq
{\beta}\geq\hat\pi}}\moeb_{}(\beta,\alpha)\cdot\sum_{\substack{(\gamma,\sigma)\in\cPP(n)\\ \gamma\leq\beta}} \Wg_\beta(\pi,\sigma)\cdot
\ff(\gamma,\sigma)[A_1,\dots,A_n]\\
&=\sum_{(\gamma,\sigma)\in\cPP(n)}\ff(\gamma,\sigma)[A_1,\dots,A_n]\sum_{\substack{\beta \in\cP(n)\\ \alpha\geq
{\beta}\geq\hat\pi\vee\gamma}}\moeb_{}(\beta,\alpha)\cdot \Wg_\beta(\pi,\sigma) \label{eq:kk-rel-Weingarten}
\end{align}

\end{remark}

\subsection{Asymptotic behavior of correlation moments and cumulants for orthogonally invariant random matrices}
Our main interest lies now in random matrices which are orthogonally invariant and which have limit distributions of all orders; this means by definition that all limits as in \Cref{eq:limitsofcorrelations} exist. From this it follows that the leading order of $\ff(\pi)=\ff(1_n,\pi)$ is given by $N^{2-\#(\hat\pi)}$ and, more generally, the leading order of $\ff(\alpha,\pi)$ is given by 
$N^{2\#(\alpha)-\#(\hat\pi)}$. 
From \Cref{eq:ff-kk} one can deduce that the leading order of $\kk(\alpha,\pi)$, for $(\alpha,\pi)\in\cPP(n)$, is given by the term $(\beta,\sigma)=(\alpha,\pi)$ and thus must be of order
$N^{2\#(\alpha)-\#(\hat\pi)-n}$.
(Indeed, this should also follows from \Cref{eq:kk-rel-Weingarten} and the leading order of the relative cumulant of the Weingarten function, as given in \cite{collins2006integration}.) \Cref{eq:k-KK2} yields then that we need cycles in the index structure to get non-vanishing cumulants of the entries of our orthogonally invariant matrices, as spelled out in  \Cref{lemma:orthcumulants}.

\section{Cumulants with multiple disjoint cycles}\label{section4}

Let us here prove the statement on cumulants of entries whose index structure consists of several disjoint cycles. First, we need some definitions to bound the order of such cumulants:

\begin{definition}
    Let $i = (i_1, \dots, i_n)$ be a multiindex consisting of $r$ many cycles of lengths $l_1, \dots, l_r$:
    \begin{multline*}
    j_1 = i_2, \ j_2  = i_3, \  \dots,\  j_{l_1} = i_1,\qquad 
    j_{l_1+1}=i_{l_1 +2},\ \dots,\ j_{l_1+l_2}=i_{l_1+1},\quad \dots\\
    \\ \dots\quad 
    j_{n-l_r+1}=i_{n-l_r+2},\ \dots,\ j_n=i_{n-l_r+1}
    \end{multline*} and let monomials $f_1(x) = x^{m_1}, \dots, f_n(x) = x^{m_n} $ be given. Then, we say that a block $I \in \pi$ of partition $\pi \in \mathcal{P}(m_1 + \cdots + m_n)$ is an intercycle-block if it connects indices belonging to the disjoint cycles in $i$. If a block is not an intercycle-block, then we call it an intra-cycle-block.
\end{definition}

    Assume now that $i$ consists of disjoint cycles. Let monomials $f_1(x) = x^{m_1}$, \dots, $f_n(x) = x^{m_n} $ be given.  Then, we want to investigate the order of the joint cumulant $c_n(f_1(x_{i_1j_1}), \dots , f_n(x_{i_nj_n}))$.
    
    If the multiindex consists of several disjoint cycles, then we may adapt the definition of $G_i$ to consist of the disjoint union of the graphs associated to its  constituent cycles. Let us first assume that we only have several disconnected cycles, without subcycles. Let $r$ be the number of disjoint cycles and $l_1,\dots,l_r$ the length of those cycles. Let us denote the inter-cycle blocks by $I_1,\dots,I_q$.
    If the inter-cycle block $I_j$ connects $f_j$ many of the disconnected cycles, then the multiplicative contribution associated to this block will be of order $N^{2- f_j - \#I_j} $, where $\#I_j$ is the size of the block $I_j$.  Applying now \Cref{Leonov} and  \Cref{lemma:orthcumulants}, we see that $c_\pi$ will be of order $N^t$ with
    \begin{align*}
        t&= \sum_{j=1}^q(2 - f_j - \#I_j) + \sum_{j = 2}^{\max \{ l_i\} } s_j(1 - j) = \sum_{j=1}^q (2 - f_j) + {\sum_{j=2}^{\max \{ l_i\} } s_j - m};
    \end{align*}
    as before, $s_j$ is here the number of intracycle blocks of size $j$.
    Now, by the definition of intercycle-blocks we have $f_j \geq 2$. We conclude that $t$ is maximal if $q=1$, $f_1 = r$ and if we have the maximal number of blocks. We can  conclude that the leading order contribution is given by a partition $\pi$ which contains one large block containing each cycle once and apart from this we only have blocks which pair an argument with itself. We infer that the order might be at most (including the $N^{(m-n)/2}$ prefactor) $N^t$ with
    \begin{align*}
        t = 2- r + \frac{m - \sum_i l_i}{2} - m + \frac{m-n}{2} = 2 - r - \frac 12\sum_i l_i - \frac{n}{2} = 2 - r - n.
    \end{align*}
    
    Thus, for disjoint cycles without subcycles the order for the $Y_N$ is the same as for the $X_N$.
    If the cycles, however, have subcycles, then as for the case of one cycle, the order might increase.
    If cycles show up several times, then in order to satisfy $\pi \vee \tau$, we need to connect elements from the different cycles, which we can achieve by choosing one subcycle from each cycle and then choosing the remaining blocks within each cycle as we did above. As before, this increase in the order will be compensated by the fact that there are less index-tuples which correspond to such a subcycle situation. Hence the contribution of such subfactor situations will be negligible in the limit.

\bibliographystyle{amsalpha}
\bibliography{non-linear}

\end{document}